\documentclass{article}
\usepackage{amsthm,amssymb,amsfonts,amsmath,graphicx,cite, color,mathrsfs,booktabs}

\usepackage[margin = 1.5in]{geometry}

\theoremstyle{plain} \numberwithin{equation}{section}
\newtheorem{theorem}{Theorem}[section]
\numberwithin{theorem}{section}
\newtheorem{lemma}[theorem]{Lemma}
\newtheorem{corollary}[theorem]{Corollary}
\newtheorem{proposition}[theorem]{Proposition}
\newtheorem{conjecture}[theorem]{Conjecture}

\theoremstyle{definition}
\newtheorem{definition}[theorem]{Definition}

\newtheorem{remark}[theorem]{Remark}
\newtheorem{example}[theorem]{Example}

\newcommand\cC{\mathcal{C}}
\newcommand\cO{\mathcal{O}}

\newcommand\cD{\mathcal{D}}
\newcommand{\N}{\mathbb{N}}
\newcommand{\R}{\mathbb{R}}
\newcommand{\vertiii}[1]{{\left\vert\kern-0.25ex\left\vert\kern-0.25ex\left\vert #1 \right\vert\kern-0.25ex\right\vert\kern-0.25ex\right\vert}}
\newcommand{\ip}[2]{\left\langle #1 , #2 \right\rangle}    % inner product

\def\transp{^{\text{\sf T}}}

\DeclareMathOperator{\diam}{diam}
\DeclareMathOperator{\affine}{aff}
\DeclareMathOperator{\conv}{conv}

\DeclareMathOperator{\dist}{dist}

\DeclareMathOperator{\vertices}{vert}
\DeclareMathOperator{\faces}{faces}
\DeclareMathOperator*{\argmin}{argmin}

\newcommand{\matr}[1]{\begin{bmatrix} #1 \end{bmatrix}}    % matrix

\def\aa{{\mathbf{a}}}
\def\bb{{\mathbf{b}}}
\def\ee{{\mathbf{e}}}
\def\pp{{\mathbf{p}}}
\def\qq{{\mathbf{q}}}

\def\vv{{\mathbf{v}}}
\def\xx{{\mathbf{x}}}

\def\yy{{\mathbf{y}}}
\def\zz{{\mathbf{z}}}
\def\ww{{\mathbf{w}}}
\def\0{{\mathbf{0}}}

\title{Calculus of the Facial Distance}
\author{Javier Peña~\thanks{{\tt jfp@andrew.cmu.edu}} \and Elias Wirth~\thanks{\tt elias.s.wirth@gmail.com}}

\begin{document}

\maketitle

\begin{abstract}
We develop a few calculus rules to compute or lower bound the facial distance of a polytope.  We illustrate our calculus rules on various popular polytopes.  In particular, we provide a 
lower bound on the facial distance of the Birkhoff polytope.
\end{abstract}

\section{Introduction}

The {\em facial distance} of a polytope, or equivalently its {\em pyramidal width}, is a key condition measure of a polytope that determines the speed of convergence of variants of the Frank-Wolfe algorithm on a polytope.  More precisely, consider a minimization problem of the form
\begin{equation}\label{eq.min.problem}
\min_{\xx\in \cC} \, f(\xx)
\end{equation}
where $f$ is $L$-smooth and $\mu$-strongly convex, and $\cC$ is a polytope.  
The developments in in~\cite{bomze2024frank,braun2025conditional,lacoste2015global,pena2019polytope,
rinaldi2023avoiding,tsuji2022pairwise,wirth2026fast,zhao2026new}
show that several variants of the Frank-Wolfe algorithm applied to~\eqref{eq.min.problem} generate a sequence of iterates $\xx^{(t)}, \; t=0,1,\dots$ such that
$$
f(\xx^{(t)}) - \min_{\xx\in \cC} \, f(\xx) = 
\cO\left(\exp\left( \frac{\mu}{L}\cdot\frac{\Phi(\cC)^2}{\diam(\cC)^2}\cdot t\right)\right)$$
where $\diam(\cC)$ and $\Phi(\cC)$ are respectively the diameter and the facial distance of $\cC$.

The facial distance is also closely related to other interesting problems in convex geometry.  In particular, it is closely related to the work of Deza et al. on {\em kissing polytopes}~\cite{deza2024kissing,deza2025small,deza2026kissing}.
The articles~\cite{lacoste2015global,pena2019polytope} compute the exact value of the facial distance for the following polytopes: the standard simplex, the $\ell_\infty$ unit ball, and the $\ell_1$ unit ball.  Besides these special cases, there is very limited work on computing or even estimating the facial distance of general polytopes.  Indeed, this computational task seems to pose a significant challenge.  

The central goal of this paper is to provide some calculus rules that facilitate the computation of the facial distance.  Our work is inspired by a recent development of Iommazzo et al.~~\cite[Theorem 4.5]{iommazzo2025linear} who derived an expression for the facial distance of the cartesian product of polytopes.  We develop three main calculus rules: a product rule that generalizes~\cite[Theorem 4.5]{iommazzo2025linear}, a rule for obtaining lower bounds in terms of facet-exposing directions, and a rule for computing distances under linear transformations.  We illustrate how these three rules yield exact expressions or lower bounds on the facial distance for new polytopes.  A particularly interesting application of our results is a lower bound on the facial distance of the Birkhoff polytope, and a similar bound for more general {\em simplex-like} polytopes. 

Our paper is organized as follows. Section~\ref{sec.prelim} recalls the formal definition of facial distance and introduces some notation and terminology that we use throughout the paper.  Section~\ref{sec.product},  Section~\ref{sec.facet}, and Section~\ref{sec.linear} present our main developments, namely some calculus rules for products, facet-exposing directions, and linear transformations.  Section~\ref{sec.examples} illustrates our calculus rules on several popular polytopes.  Finally, Section~\ref{sec.conjecture} discusses  conjecture on the facial distance that is naturally suggested by the examples in Section~\ref{sec.examples}.

\section{Preliminaries}\label{sec.prelim}

Let $\R^n$ be endowed with a norm $\|\cdot\|$ and let $\cC \subseteq \R^n$ be a polytope.  Denote the sets of vertices and nonempty faces of $\cC$ by $\vertices(\cC)$ and $\faces(\cC)$ respectively.  For $F, G \in \faces(\cC)$ let 
\[
    \dist(F, G) = \min_{\xx\in F, \yy\in G} \|\xx-\yy\|.
\]
To ease notation, we will write $\cC[G]$ as shorthand for $\conv(\vertices(\cC)\setminus G)$ whenever $\cC$ is a polytope and $G\in \faces(\cC)$ with $G\ne \cC$. For $k\in \N$, we will write $[k]$ to denote the set $\{1,\dots,k\}$.
We next recall the definition of our main object of discussion, namely the {\em facial distance} of a polytope.  We also recall the definition of the much simpler and intuitive {\em diameter} of a polytope.

\begin{definition}[facial distance]\label{def.facial_distance}
Let $\R^n$ be endowed with a norm $\|\cdot\|$ and let $\cC \subseteq \R^n$ be a polytope. For $F\in \faces(\cC)$ the \emph{facial distance} from $F$ is defined as
\[
        \Phi(F,\cC):=\min_{G\in\faces(F)\atop G\subsetneq \cC} \dist(G, \cC[G]).
\]
We will write $\Phi(\cC)$ as shorthand for $\Phi(\cC,\cC)$.  We note that $\Phi(\cC,\cC)\le \Phi(F,\cC)$ for every $F\in\faces(\cC)$.  Furthermore, $\Phi(\cC,\cC)$ is exactly the same as the {\em pyramidal width} defined in~\cite{lacoste2015global} as shown in~\cite{pena2019polytope}.
\end{definition}

\begin{definition}[diameter]\label{def.facial_distance}
Let $\R^n$ be endowed with a norm $\|\cdot\|$ and let $\cC \subseteq \R^n$ be a polytope. The \emph{diameter} of $\cC$ is defined as
\[
\diam(\cC):=\max_{\xx,\yy\in \cC} \|\xx-\yy\|.
\]
\end{definition}

\section{Products}
\label{sec.product}
Theorem~\ref{thm.product} below gives a generic identity for the facial distance of the cartesian product of $k$ polytopes in terms of the facial distances of the individual components.  Throughout this section let $n_1,\dots,n_k\in \N$ be fixed and assume that the norms on $\R^{n_i}$ and on $\R^{n_1+\cdots+n_k} = \R^{n_1} \times \cdots \times \R^{n_k}$ are related via
\[
\|(\xx_1,\dots,\xx_k)\| = \vertiii{(\|\xx_1\|,\dots,\|\xx_k\|)}
\]
for some norm $\vertiii{\cdot}$ in $\R^k$, and some norms $\|\cdot\|$ on each of the $\R^{n_i},\; i=1,\dots,k$. Observe that $\vertiii{\cdot} = \|\cdot\|_2$ in the special case when each $\R^{n_i}$ and $\R^{n_1+\cdots+n_k}$ are endowed with the Euclidean norm.

We shall say that a norm $\vertiii{\cdot}$ on 
$\R^k$ is {\em reasonable} if $\vertiii{x} \le \vertiii{y}$ whenever $|x| \le |y|$ componentwise.  Notice that many popular norms on $\R^k$, including all $\ell_p$ norms for $p\in [1,\infty]$ are reasonable.  Observe also that $\vertiii{\cdot}$ is reasonable if and only if $\vertiii{\cdot}^*$ is reasonable.

The following identity is inspired by and generalizes~\cite[Theorem 4.5]{iommazzo2025linear}.

\begin{theorem}\label{thm.product} Suppose that for $i=1,\dots,k$ the sets $\cC_i \subseteq \R^{n_i}$ are polytopes  and $F_i \in \faces(\cC_i)$.  If the norm $\vertiii{\cdot}$ on $\R^k$ is reasonable then for $F:=F_1\times \cdots \times F_k$ and $\cC:=\cC_1\times \cdots \times \cC_k$ it holds that
\begin{equation}\label{eq.prod.dist}
\frac{1}{\Phi(F,\cC)} = \vertiii{\left(\frac{1}{\Phi(F_1,\cC_1)},\cdots,\frac{1}{\Phi(F_k,\cC_k)}\right)}^*.
\end{equation}
\end{theorem}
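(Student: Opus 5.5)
The plan is to identify the faces of $\cC$ explicitly, obtain an exact formula for $\dist(G,\cC[G])$ for each face $G\subseteq F$, and then minimize over $G$.

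\textbf{Faces of the product.} The nonempty faces of $\cC$ are exactly the products $G=G_1\times\cdots\times G_k$ with $G_i\in\faces(\cC_i)$, and $\vertices(\cC)=\vertices(\cC_1)\times\cdots\times\vertices(\cC_k)$. If $G\subseteq F$ and $G\ne\cC$, then each $G_i\subseteq F_i$ and the index set $I:=\{i: G_i\ne \cC_i\}$ is nonempty. A vertex tuple lies outside $G$ iff some coordinate $i\in I$ lies outside $G_i$. Hence
\[
\cC[G]=\conv\Big(\bigcup_{i\in I}\cC_1\times\cdots\times\cC_i[G_i]\times\cdots\times\cC_k\Big).
\]

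\textbf{Key claim.} Set $d_i:=\dist(G_i,\cC_i[G_i])$ for $i\in I$, and put $1/d_i:=0$ for $i\notin I$. I would prove
\[
\dist(G,\cC[G]) = 1\Big/\vertiii{(1/d_1,\dots,1/d_k)}^* .
\]
Two ingredients are needed.

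First, an elementary duality fact for reasonable norms: for $d\in(0,\infty]^k$,
\[
\min_{\lambda\in\Delta_I}\vertiii{(\lambda_i d_i)_i}=1/\vertiii{(1/d_i)_i}^*,
\]
where $\Delta_I$ is the simplex supported on $I$. I would prove it with H\"older-type duality, using monotonicity of $\vertiii{\cdot}$ and $\vertiii{\cdot}^*$ so that only nonnegative vectors matter.

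Second, the two inequalities.

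\emph{Upper bound.} Pick $x_i\in G_i$ and $a_i\in\cC_i[G_i]$ with $\|x_i-a_i\|=d_i$, and let $\lambda$ attain the minimum above. Put $z^i:=(x_1,\dots,a_i,\dots,x_k)$ and $z:=\sum_{i\in I}\lambda_i z^i\in\cC[G]$. Then $x_i-z_i=\lambda_i(x_i-a_i)$, so $\|x-z\|=\vertiii{(\lambda_i d_i)}$, which gives the upper bound.

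\emph{Lower bound.} Take arbitrary $x\in G$ and $z=\sum_{i\in I}\lambda_i z^i\in\cC[G]$ with $z^i_i\in\cC_i[G_i]$. Using $\cC_i=\conv(G_i\cup\cC_i[G_i])$, I would rewrite each coordinate as $z_i=\theta_i c_i+(1-\theta_i)g_i$ with $c_i\in\cC_i[G_i]$, $g_i\in G_i$ and $\theta_i\ge\lambda_i$. It then suffices to show
\[
\|x_i-(\theta c+(1-\theta)g)\|\ge\theta\, d_i \qquad\text{for all } x_i,g\in G_i,\ c\in\cC_i[G_i],\ \theta\in[0,1].
\]
Given this, reasonableness yields $\|x-z\|\ge\vertiii{(\lambda_i d_i)}$, and the duality fact finishes the claim.

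\textbf{Main obstacle.} The single-polytope inequality just displayed is the step I expect to be hardest. It cannot be handled by writing $z_i-x_i=\theta(c-g)+(g-x_i)$ and estimating termwise, since $g-x_i$ could partially cancel $c-g$. My first attempt is to argue by contradiction. Suppose $\theta c+(1-\theta)g$ is closer than $\theta d_i$ to $G_i$. Then rescale about $x_i$: the point $x_i+\theta^{-1}\big(z_i-x_i\big)=c+\theta^{-1}(1-\theta)(g-x_i)$ should be shown to lie in the affine hull, and the aim is to use the face property of $G_i$ to find a point of $G_i$ within distance $<d_i$ of $\cC_i[G_i]$. Failing that, I would fall back on the separating-functional characterization of facial distance from \cite{pena2019polytope}: a functional $u$ with $\|u\|^*=1$ that is constant on $G_i$, minimized over $\cC_i$ on $G_i$, and exceeds that value by at least $d_i$ on every vertex outside $G_i$. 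Applying $u$ gives $\langle u,z_i-x_i\rangle\ge\theta d_i$ directly. The fact I must establish is that such a $u$ exists for faces; this is where the facial structure is used.

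\textbf{Minimization over $G$.} With the claim in hand, $\Phi(F,\cC)$ is the minimum over admissible $(G_i)_i$ of $1/\vertiii{(1/d_i)}^*$. Since $\vertiii{\cdot}^*$ is monotone, this is minimized by taking $I=[k]$ and each $G_i\subseteq F_i$ minimizing $d_i$, so that $d_i=\Phi(F_i,\cC_i)$. This gives \eqref{eq.prod.dist}.
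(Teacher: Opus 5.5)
There is a genuine gap in the lower-bound half. Your upper bound is correct and is exactly the paper's construction.

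The key claim is false. You assert $\dist(G,\cC[G]) = 1/\vertiii{(1/d_1,\dots,1/d_k)}^*$ with $d_i=\dist(G_i,\cC_i[G_i])$. The single-polytope inequality $\|x_i-(\theta c+(1-\theta)g)\|\ge\theta d_i$ that you reduce it to is also false. Take the Euclidean norm, $\cC_1=\conv\{(0,0),(1,0),(3,1)\}$ and $G_1=[(0,0),(1,0)]$, so $\cC_1[G_1]=\{(3,1)\}$ and $d_1=\sqrt5$. With $g=(0,0)$, $c=(3,1)$, $\theta\le 1/3$ and $x_1=(3\theta,0)\in G_1$, the left side equals $\theta<\sqrt5\,\theta$. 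Now add $\cC_2=[0,1]$ and $G_2=\{0\}$, so $d_2=1$. The points $((1,0),0)\in G$ and $\tfrac13((3,1),0)+\tfrac23((0,0),1)\in\cC[G]$ are at squared distance $5/9$, whereas your formula predicts $5/6$.

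Your fallback fails for the same reason. A functional of dual norm one that is constant on $G_i$ is constant on $\affine(G_i)$. Its gap is therefore at most $\dist(\affine(G_i),\cC_i[G_i])$, which can be strictly smaller than $d_i$; it equals $1$ in the example. Likewise, your rescaling attempt only produces a point of $\affine(G_i)$ close to $\cC_i[G_i]$, not a point of $G_i$, and the face property cannot repair this.

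Your decomposition $z_i=\theta_i c_i+(1-\theta_i)g_i$ does prove a weaker bound. Use a functional constant on $\affine(G_i)$, which exists by the usual duality between an affine subspace and a polytope. This gives $\dist(G,\cC[G])\ge 1/\vertiii{(1/\delta'_1,\dots,1/\delta'_k)}^*$, where $\delta'_i:=\dist(\affine(G_i),\cC_i[G_i])$ for $i\in I$ and $1/\delta'_i:=0$ otherwise.

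The missing idea is the comparison $\delta'_i\ge\Phi(F_i,\cC_i)$ for every proper face $G_i\subseteq F_i$. Equivalently, $\min_{G_i}\dist(\affine(G_i),\cC_i[G_i])=\Phi(F_i,\cC_i)$. This is Lemma~\ref{lemma} in the paper. Take a face minimizing the affine-hull distance; its optimal point $p\in\affine(G_i)$ must lie in $G_i$. Otherwise, pick a facet $\tilde G$ of $G_i$ whose affine hull separates $p$ from $G_i$, and a vertex of $G_i$ off $\affine(\tilde G)$. Contracting the optimal pair toward that vertex yields a strictly closer pair for $\affine(\tilde G)$ and $\cC_i[\tilde G]$, a contradiction.

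With that lemma and monotonicity of $\vertiii{\cdot}^*$, you get the lower bound for every admissible $G$. Together with your upper bound at the product of minimizing faces, this gives the theorem. The paper packages the same step as a single combined functional $\bigl(\aa_i/\delta'_i\bigr)_i/\vertiii{(1/\delta'_i)_i}^*$ on the product rather than coordinatewise, but that difference is cosmetic. Since no exact per-face formula holds, your final minimization step must be replaced by this two-sided argument.
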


We note that the format of Theorem~\ref{thm.product} is similar to that of the following simpler and intuitive identity for the diameter.

\begin{proposition}\label{prop.product} 
Suppose that for $i=1,\dots,k$ the sets $\cC_i \subseteq \R^{n_i}$ are polytopes and $\cC:=\cC_1\times \cdots \times \cC_k$.  If the norm $\vertiii{\cdot}$ on $\R^k$ is reasonable then
\[
\diam(\cC) = \vertiii{\left(\diam(\cC_1),\dots,\diam(\cC_k)\right)}.
\]
\end{proposition}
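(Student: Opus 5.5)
The plan is to prove the two inequalities separately, directly from the definition $\diam(\cC)=\max_{\xx,\yy\in\cC}\|\xx-\yy\|$ and the relation $\|(\xx_1,\dots,\xx_k)\| = \vertiii{(\|\xx_1\|,\dots,\|\xx_k\|)}$.

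For the upper bound $\diam(\cC) \le \vertiii{(\diam(\cC_1),\dots,\diam(\cC_k))}$, I take arbitrary $\xx=(\xx_1,\dots,\xx_k)$ and $\yy=(\yy_1,\dots,\yy_k)$ in $\cC$. Then $\|\xx-\yy\| = \vertiii{(\|\xx_1-\yy_1\|,\dots,\|\xx_k-\yy_k\|)}$. Each entry satisfies $0\le \|\xx_i-\yy_i\|\le \diam(\cC_i)$, so the vector of entries is componentwise dominated in absolute value by $(\diam(\cC_1),\dots,\diam(\cC_k))$. Reasonableness of $\vertiii{\cdot}$ therefore gives $\|\xx-\yy\|\le \vertiii{(\diam(\cC_1),\dots,\diam(\cC_k))}$, and maximizing over $\xx,\yy$ yields the bound.

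For the lower bound, I use compactness of each polytope to choose, for each $i$, points $\xx_i^*,\yy_i^*\in\cC_i$ with $\|\xx_i^*-\yy_i^*\| = \diam(\cC_i)$. Because $\cC$ is a Cartesian product, the points $\xx^*=(\xx_1^*,\dots,\xx_k^*)$ and $\yy^*=(\yy_1^*,\dots,\yy_k^*)$ lie in $\cC$. Their distance is exactly $\vertiii{(\diam(\cC_1),\dots,\diam(\cC_k))}$, so $\diam(\cC)$ is at least this value. This direction needs no monotonicity.

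There is no real obstacle here. The only point to note is that reasonableness is used solely in the upper bound, and that maximizers can be chosen independently in each factor precisely because of the product structure.
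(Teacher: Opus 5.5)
Your proof is correct and follows essentially the same route as the paper's, which compresses your two inequalities into the single step of interchanging $\max_{\xx,\yy\in\cC}$ with $\vertiii{\cdot}$. Your version makes that interchange explicit: reasonableness gives the upper bound, and choosing maximizers independently in each factor (possible because of the product structure) gives the lower bound.
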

\begin{proof}
This is a straightforward calculation.  
Since $\vertiii{\cdot}$ is reasonable, it follows that 
\[
\begin{aligned}
\diam(\cC) &= \max_{\xx,\yy\in \cC} \|\xx-\yy\| \\
&=\max_{\xx,\yy\in \cC}  \vertiii{\left(\|\xx_1-\yy_1\|,\dots,\|\xx_k-\yy_k\|\right)}\\
&=\vertiii{\left(\max_{\xx_1,\yy_1\in \cC_1} \|\xx_1-\yy_1\|,\dots,\max_{\xx_k,\yy_k\in \cC_k} \|\xx_k-\yy_k\|\right)}\\
&=\vertiii{\left(\diam(\cC_1),\dots,\diam(\cC_k)\right)}.
\end{aligned}
\]
\end{proof}
In contrast to Proposition~\ref{prop.product}, the proof of Theorem~\ref{thm.product} quite a bit more elaborate.  It relies on the following lemma, which is a slight modification of~\cite[Lemma 4.3]{iommazzo2025linear}.  

\begin{lemma}\label{lemma} Suppose $\cC \subseteq \R^{n},$ $F\in \faces(\cC)$.
Let 
\[
G:= \argmin_{G'\in\faces(F)\atop G'\subsetneq \cC} \dist(\affine(G'), \cC[G'])
\]
Then the point $\pp\in \affine(G)$ attaining $\dist(\affine(G), \cC[G])$ belongs to $G$.  In particular, 
\[
\Phi(F,C) = 
\dist(G, \cC[G]) = \dist(\affine(G), \cC[G]) = \min_{G'\in\faces(F)\atop G'\subsetneq \cC} \dist(\affine(G'), \cC[G']).
\]
\end{lemma}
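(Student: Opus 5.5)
The plan is to argue by contradiction with a rescaling trick. Fix $G$ attaining the minimum in the statement; if there are several minimizers, first take one that is minimal with respect to inclusion. Let $\pp\in\affine(G)$ and $\qq\in\cC[G]$ attain $\delta:=\dist(\affine(G),\cC[G])=\|\pp-\qq\|$. Note that $\delta>0$, since $\affine(G)\cap\cC = G$ and $G\cap\cC[G]=\emptyset$. Suppose $\pp\notin G$.

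The first step is to write $\pp$ as an affine combination of vertices of $G$,
\[
\pp=\sum_{\vv\in V_+}\lambda_\vv\vv-\sum_{\vv\in V_-}\mu_\vv\vv,\qquad \lambda,\mu> 0,\quad \sum_{V_+}\lambda_\vv-\sum_{V_-}\mu_\vv=1 .
\]
Here $V_-\neq\emptyset$ because $\pp\notin G$. Put $s:=\sum_{V_-}\mu_\vv>0$. Rearranging gives two points,
\[
\pp':=\frac{\pp+\sum_{V_-}\mu_\vv\vv}{1+s}\in\conv(V_+),\qquad
\qq':=\frac{\qq+\sum_{V_-}\mu_\vv\vv}{1+s}.
\]
They satisfy $\|\pp'-\qq'\|=\delta/(1+s)<\delta$. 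The goal is to find a face $G'\subsetneq G$ (hence $G'\in\faces(F)$ and $G'\subsetneq\cC$) with $V_+\subseteq G'$ and $V_-\cap G'=\emptyset$. For such a $G'$ we have $\pp'\in G'\subseteq\affine(G')$. Every vertex in $V_-$ is a vertex of $\cC$ outside $G'$, and so is every vertex outside $G$. Hence $\qq'\in\cC[G']$, because $\qq$ is a convex combination of vertices outside $G\supseteq G'$. This would give
\[
\dist(\affine(G'),\cC[G'])<\delta,
\]
contradicting the minimality of $G$. Therefore $\pp\in G$.

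The main obstacle is producing $G'$, that is, choosing the affine representation of $\pp$ so that its positive and negative supports are separated by a face of $G$. My first attempt uses a facet of $G$ violated by $\pp$. Since $\pp\in\affine(G)\setminus G$, there is a facet $H$ of $G$, $H=G\cap\{\xx:\aa\transp\xx=b\}$ with $G\subseteq\{\aa\transp\xx\le b\}$, such that $\aa\transp\pp>b$. I would then take a point $\mathbf{h}$ in the relative interior of $H$ and move from $\pp$ along the direction away from the opposite vertices. The aim is to express $\pp$ as an affine combination with positive weights only on vertices of $H$ and negative weights only on vertices of $G\setminus H$. That gives $G'=H$, or a face of $H$ in an iterated version. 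If a single facet does not suffice, I would induct on $\dim G$, replacing $G$ by a facet and using the inclusion-minimal choice of $G$ to rule out ties.

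Once $\pp\in G$ is established, the displayed chain of equalities is routine. We have $\dist(G,\cC[G])\ge\dist(\affine(G),\cC[G])$ trivially, and equality holds because the minimizing point $\pp$ lies in $G$. Moreover $\dist(\affine(G'),\cC[G'])\le\dist(G',\cC[G'])$ for every $G'$. Combining these, the two minima coincide and equal $\Phi(F,\cC)$.
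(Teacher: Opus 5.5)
\paragraph{There is a genuine gap.} Your reduction is sound. Suppose you have a face $G'\subsetneq G$ and a representation of $\pp$ whose negative-weight vertices $V_-$ avoid $G'$, with $\pp'\in\affine(G')$. Then $\qq'\in\cC[G']$ and $\|\pp'-\qq'\|=\delta/(1+s)<\delta$ contradicts the choice of $G$. Because the decrease is strict, this works for any minimizer, so your inclusion-minimal choice of $G$ is unnecessary (and would only prove the first claim for one minimizer).

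The problem is the existence of $G'$, which you yourself call the main obstacle. You leave it unproved, and the single-facet version you aim for is false in general. Take $G=\conv\{A,B,C\}$ with $A=(0,0)$, $B=(1,0)$, $C=(0,1)$, and $\pp=(2,-\tfrac{1}{10})$. Its unique affine representation is $\pp=-\tfrac{9}{10}A+2B-\tfrac{1}{10}C$. Both facets violated by $\pp$, namely $[A,B]$ and $[B,C]$, contain a vertex with negative weight, so the only admissible $G'$ is $\{B\}$. The induction on $\dim G$ that would handle such cases is only gestured at.

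\paragraph{The missing observation.} This is what the paper uses: your argument only needs $\pp'\in\affine(G')$, not $\pp'\in\conv(V_+)$, because the lemma minimizes $\dist(\affine(G'),\cC[G'])$. So a single step suffices.
\begin{itemize}
\item Take a facet $\tilde G$ of $G$ whose affine hull separates $\pp$ from $G$, and a vertex $\vv\in\vertices(G)\setminus\affine(\tilde G)$.
\item The segment $[\pp,\vv]$ meets $\affine(\tilde G)$ at $\tilde\pp=\lambda\pp+(1-\lambda)\vv$ for some $\lambda\in(0,1)$.
\item Set $\tilde\qq:=\lambda\qq+(1-\lambda)\vv$. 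Since $\vv\notin\tilde G$ and $\qq\in\cC[G]\subseteq\cC[\tilde G]$, we have $\tilde\qq\in\cC[\tilde G]$.
\item Hence $\dist(\affine(\tilde G),\cC[\tilde G])\le\lambda\delta<\delta$, a contradiction.
\end{itemize}
This is your rescaling with the single vertex $\vv$ in the role of $V_-$, $1+s=1/\lambda$, and $\pp'=\tilde\pp$ allowed to lie in $\affine(\tilde G)$ rather than in $\tilde G$.

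If you want to keep your stronger requirement $\pp'\in G'$, iterate exactly this step along a chain $G\supsetneq G_1\supsetneq G_2\supsetneq\cdots$ until the current point lies in the current face. This terminates because the dimension drops and the affine hull of a vertex is the vertex itself. Every vertex used lies outside the final face, which gives the $V_+$, $V_-$ you were after.

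Your remaining steps, including $\delta>0$ and the final chain of equalities, are fine.
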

\begin{proof}
The proof is nearly identical to the proof of~\cite[Lemma 4.3]{iommazzo2025linear}.  We include it here since the above statement is a slight modification of this previous result.

Proceed by contradiction.  Suppose $\pp\in \affine(G)$ and $\qq \in \cC[G]$ are such that 
$\|\pp-\qq\| = \dist(\affine(G), \cC[G])$ and $\pp \not \in G$.  Let $\tilde G$ be a facet of $G$ such that $\affine(\tilde G)$ separates $\pp$ from $G$ and let $\vv\in \vertices(G) \setminus \affine(\tilde G).$   Since  $\affine(\tilde G)$ has codimension one in $\affine(G)$ and the points $\pp,\vv\in\affine(G)$ are in opposite sides of this  hyperplane, it follows that the segment $[p,v]$ must intersect $\affine(\tilde G)$.  That is, $\tilde \pp := \lambda \pp + (1-\lambda)\vv \in\affine(\tilde G)$ for  some $\lambda\in (0,1)$.  Therefore  $\tilde \qq:=\lambda \qq + (1-\lambda)\vv \in \cC[\tilde G]$ and $\|\tilde \pp - \tilde \qq\| = \lambda \|\pp-\qq\|$ and consequently
\[
\dist(\tilde G, \cC[\tilde G])\le \|\tilde \pp - \tilde \qq\| = \lambda \|\pp-\qq\| < \dist(\affine(G), \cC[G]).
\]
This contradicts the choice of $G$. Thus it must be the case that $\pp\in G$.

For the second statement, observe that since $\pp\in G$ it follows that
\[
\Phi(F,\cC) \le 
\dist(G, \cC[G]) = \dist(\affine(G), \cC[G]) = \min_{G'\in\faces(F)\atop G'\subsetneq \cC} \dist(\affine(G'), \cC[G']).
\]
Since the above right-most expression is evidently less than or equal to $\Phi(F,\cC)$, it follows that all of the above expressions are the same.
\end{proof}

\begin{proof}[Proof of Theorem~\ref{thm.product}]
For $i=1,\dots,k$ let $G_i \in \faces(F_i)$ be such that 
\[
\delta_i := \Phi(F_i,\cC_i) = \dist(G_i,\cC_i[G_i]).
\]
Then $G := G_1\times \cdots\times G_k \in \faces(F)$. For $i\in[k]$ 
let $\pp_i \in G_i, \qq_i\in \cC_i[G_i]$ be such that $\|\pp_i-\qq_i\| = \delta_i$.  First we show that the left-hand-side in~\eqref{eq.prod.dist} is larger than or equal than the right-hand-side.  To that end, it suffices to construct $\pp\in G, \qq\in \cC[G]$ such that
\begin{equation}\label{eq.to.show}
\|\pp-\qq\|=\frac{1}{\vertiii{1/\boldsymbol{\delta}}^*}
\end{equation}
for $1/\boldsymbol{\delta}:=(1/\delta_1,\dots,1/\delta_k) \in \R^k$.
Indeed, if~\eqref{eq.to.show} holds for some $\pp\in G, \qq\in \cC[G]$ then 
\begin{equation}\label{lhs.less.rhs}
\frac{1}{\Phi(F,\cC)} \ge \frac{1}{\dist(G,\cC[G])} \ge \vertiii{1/\delta}^* = 
\vertiii{\left(\frac{1}{\Phi(F_1,\cC_1)},\cdots,\frac{1}{\Phi(F_k,\cC_k)}\right)}^*.
\end{equation}
The construction of $\pp\in G$ and $\qq\in \cC[G]$ is as follows.  Let
\[
\pp = (\pp_1,\dots,\pp_k), \qq = \lambda_1\qq^1+\cdots+\lambda_k\qq^k
\]
for $\qq^1,\dots,\qq^k\in \cC[G]$ and $\boldsymbol{\lambda} \in \Delta_{k-1}$ that we next describe. For $i\in[k]$ let $\qq^i \in \cC[G]$ be the vector obtained by changing the $i$-th block of $\pp\in G$ from $\pp_i\in G_i$ to $\qq_i\in \cC_i[G_i]$.  In other words,
\[
\qq^1 = (\qq_1,\pp_2,\dots,\pp_k), \qq^2 = (\pp_1,\qq_2,\pp_3,\dots,\pp_k), \dots, \qq^k = (\pp_1,\dots,\pp_{k-1},\qq_k).
\]
By definition of the dual norm, 

\begin{align*}
   \vertiii{1/\boldsymbol{\delta}}^* = \max_{\vertiii{z} \leq 1} \langle 1/\boldsymbol{\delta} , z\rangle.
\end{align*}
Thus, after rescaling, there exists a $\zz\in \R^k$ such that $ \langle 1/\boldsymbol{\delta} , \zz\rangle = 1$ and $\vertiii{\zz} = 1/\vertiii{1/\boldsymbol{\delta}}^*$. Since the norm $\vertiii{\cdot}$ is reasonable and $1/\delta > 0$ it follows that $\zz\ge 0$. For $i \in [k]$, let $\lambda_i := z_i / \delta_i$. Since $\zz\geq0$ and $1/\boldsymbol{\delta} >0$ it follows that $\boldsymbol{\lambda} \geq 0$. Also $\zz = (\lambda_1 \delta_1,\dots,\lambda_k\delta_k)$. Finally, $\boldsymbol{\lambda} \in \Delta_{k-1}$ because

\begin{align*}
    1 & = \langle 1/\boldsymbol{\delta} , \zz\rangle = \sum_{i=1}^k z_i / \delta_i = \sum_{i=1}^k \lambda_i.
\end{align*}
We next show that $\pp,\qq$ satisfy~\eqref{eq.to.show}.  Indeed,
\[
\|\pp-\qq\| = \|(\lambda_1(\pp_1-\qq_1),\dots,\lambda_k(\pp_k-\qq_k))\| = \vertiii{(\lambda_1 \delta_1,\dots,\lambda_k\delta_k)} = \vertiii{\zz} = \frac{1}{\vertiii{1/\boldsymbol{\delta}}^*}.
\]
Thus~\eqref{eq.to.show} holds and consequently so does~\eqref{lhs.less.rhs}.

Next we show that the left-hand-side in~\eqref{eq.prod.dist} is less than or equal than the right-hand-side.  To that end, it suffices to show that for all $G'\in \faces(F), G'\subsetneq \cC$ it holds that
\begin{equation}\label{eq.to.show.other}
\dist(\affine(G'),\cC[G']) \ge \frac{1}{\vertiii{1/\boldsymbol{\delta}}^*}.
\end{equation}
Indeed, if~\eqref{eq.to.show.other} holds for all $G'\in \faces(F), G'\subsetneq \cC$
then Lemma~\ref{lemma} yields
\begin{equation}\label{rhs.less.lhs}
\begin{aligned}
\frac{1}{\Phi(F,\cC)} &= \frac{1}{\displaystyle\min_{G'\in\faces(F)\atop G'\subsetneq \cC} \dist(\affine(G'), \cC[G'])} \\
&\le \vertiii{1/\boldsymbol{\delta}}^* \\&= 
\vertiii{\left(\frac{1}{\Phi(F_1,\cC_1)},\cdots,\frac{1}{\Phi(F_k,\cC_k)}\right)}^*.
\end{aligned}
\end{equation}
Suppose $G'\in \faces(F), G'\subsetneq \cC$. Then $G'= G'_1\times \cdots\times G'_k$ for 
some $G'_i \in \faces(F_i), \; i=1,\dots,k$. Since $G'\subsetneq \cC$, we must have $G'_i\subsetneq \cC_i$ for some $i=1,\dots,k$.  Without loss of generality assume $G'_i\subsetneq \cC_i$ for $i=1,\dots,j$ with $1\le j \le k$. For each $i=1,\dots,j$ let 
$\delta'_i = \dist(\affine(G'_i),\cC_i[G'_i])$ and let $\aa_i\in\R^{n_i}$ be such that $\|\aa_i\|^*=1$, $\ip{\aa_i}{\xx}$ is constant on $\affine(G'_i)$, and 
$
\ip{\aa_i}{\xx-\yy} \ge \delta'_i \text{ for all } \xx\in \affine(G'_i), \yy\in \cC_i[G'_i].
$
Let
\[
\aa := \frac{1}{\vertiii{(1/{\delta'_1},\dots, 1/{\delta'_j},0,\dots,0)}^*}
\cdot \left(\frac{\aa_1}{\delta'_1},\dots, \frac{\aa_j}{\delta'_j},\0,\dots,\0\right).
\]
The construction of $a$ ensures that $\|\aa\|^*=1$ and $\ip{\aa}{\xx}$ is constant on $\affine(G')$.  Furthermore, for any $\xx\in \affine(G')$ the  minimum of $\ip{\aa}{\xx-\yy}$ over $y\in\cC\setminus G'$ is attained at some $\vv \in \vertices(\cC\setminus G')$.  Thus $\vv_i \in \cC_i \setminus G'_i$ for some $i=1,\dots,j$ and consequently
\[
\begin{aligned}
\min\{\ip{\aa}{\xx-\yy}\;|\;  \yy\in\cC[G']\}  &\ge \frac{1}{\vertiii{(1/{\delta'_1},\dots, 1/{\delta'_j},0,\dots,0)}^*}\cdot \frac{\ip{\aa_i}{\xx_i-\vv_i}}{\delta'_i}\\
& \ge\frac{1}{\vertiii{(1/{\delta'_1},\dots, 1/{\delta'_j},0,\dots,0)}^*}.
\end{aligned}
\]
In particular, since $\|\aa\|^*=1$, it follows that for all $\xx\in \affine(G'), \; y\in\cC[G']$
\[
\|\xx-\yy\| \ge \ip{\aa}{\xx-\yy} \ge 
\frac{1}{\vertiii{(1/{\delta'_1},\dots, 1/{\delta'_j},0,\dots,0)}^*}
\]
and consequently
\[
\dist(\affine(G'),\cC[G']) \ge 
\frac{1}{\vertiii{(1/{\delta'_1},\dots, 1/{\delta'_j},0,\dots,0)}^*}.  
\]
Furthermore $\delta'_i \ge \delta_i = \Phi(F_i,\cC_i)$ for $i=1,\dots,j$ because $\delta'_i = \dist(G'_i,\cC_i[G'_i])$ and $G'_i\in \faces(F_i)$ with $G'_i\ne \cC_i$.  Therefore, since $\vertiii{\cdot}^*$ is reasonable, it follows that
\[
\begin{aligned}
\dist(\affine(G'),\cC[G']) &\ge 
\frac{1}{\vertiii{(1/{\delta'_1},\dots, 1/{\delta'_j},0,\dots,0)}^*}\\
&\ge 
\frac{1}{\vertiii{(1/{\delta_1},\dots, 1/{\delta_j},0,\dots,0)}^*}\\
&\ge\frac{1}{\vertiii{1/\boldsymbol{\delta}}^*}.
\end{aligned}
\]
Thus~\eqref{eq.to.show.other} holds and consequently so does~\eqref{rhs.less.lhs}.

\end{proof}

\section{Facet-exposing directions}
\label{sec.facet}
Theorem~\ref{thm.facets} below gives bounds on the facial distance of a polytope in term of {\em facet-exposing directions} and {\em gaps} described next.  In contrast to the facial distance, the latter quantities are often easily computable for polytopes with few facets or with a suitable polyhedral description as some of our examples in Section~\ref{sec.examples} illustrate.

Suppose $\cC\subseteq\R^n$ is a polytope and $F\in \faces(\cC), \; F\subsetneq \cC$.  We say that $\aa\in \R^n$ is a {\em face-exposing direction} of $F$ if $\|\aa\|^*=1$ and
\[
F = \argmin\{\ip{\aa}{\xx} \:|\; \xx\in \cC\}.
\]
When this is the case, define the {\em gap} of $\aa$ as follows
\[
\sigma(\aa) := \min\{\ip{\aa}{\vv} \; | \; \vv\in \vertices(\cC)\setminus F\} = \min\{\ip{\aa}{\xx} \; | \; \xx \in \cC[F]\}>0.
\]
Observe that $\dist(\affine(F),\cC[F]) \ge \sigma(\aa)$ for any face exposing direction of $F$ and the equality is attained for a judiciously chosen face-exposing direction.  We will say {\em facet-exposing direction} in lieu of face-exposing direction in the special case when $F$ is a facet.

Next, suppose $F_1,\dots,F_m$ are the facets of $\cC$. 
For $F\in \faces(\cC)$ let $I(F):=\{j \;|\; F\subseteq F_j\}$. Observe  that $I(F)\ne \emptyset$ whenever $F\subsetneq \cC$ and in that case 
 \[
 F = \bigcap_{j\in I(F)}F_j.
 \] Theorem~\ref{thm.facets} below gives bounds on the facial distance of $\cC$ in terms of  facet-exposing directions of the facets of $\cC$.

\begin{theorem}\label{thm.facets}
Suppose $\cC\subseteq\R^n$ is a polytope and $F_1,\dots,F_m$ are the facets of $\cC$ with facet-exposing directions $\aa_j, \; j=1,\dots,m$. Let 
$\sigma_j:=\sigma(\aa_j), \; j=1,\dots,m$.    Then for $F\in \faces(\cC)$ with $F\subsetneq \cC$
\[
\dist(\affine(F),\cC[F]) \ge \frac{1}{\|\sum_{j\in I(F)} \aa_j/\sigma_j\|^*}.
\]
Consequently for $F\in \faces(\cC)$ it holds that
\[
\Phi(F,\cC) \ge \min_{G\in\faces(F)\atop G\subsetneq \cC} \frac{1}{\|\sum_{j\in I(G)} \aa_j/\sigma_j\|^*}.
\]
\end{theorem}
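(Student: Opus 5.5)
Our plan is to exhibit a single linear functional that separates $\affine(F)$ from $\cC[F]$ with a quantifiable margin, mimicking the second half of the proof of Theorem~\ref{thm.product}. Fix $F\in\faces(\cC)$ with $F\subsetneq\cC$, so $I(F)\neq\emptyset$. Let
\[
\bb := \sum_{j\in I(F)} \frac{\aa_j}{\sigma_j}, \qquad \aa := \frac{\bb}{\|\bb\|^*}.
\]
We first check that $\bb\neq \0$, so that $\aa$ is well defined with $\|\aa\|^*=1$. This will follow from the margin computation below, which gives $\ip{\bb}{\yy-\xx}\ge 1$ for suitable points, so $\bb$ cannot vanish.

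The key steps are as follows. (i) For $\xx\in\affine(F)$ and each $j\in I(F)$ we have $F\subseteq F_j$. Since $\ip{\aa_j}{\cdot}$ is constant on $F_j$, and hence on $\affine(F)$, we obtain $\ip{\aa_j}{\xx}=\min_{\cC}\ip{\aa_j}{\cdot}=:c_j$. (ii) For each vertex $\vv\in\vertices(\cC)\setminus F$, the representation $F=\bigcap_{j\in I(F)}F_j$ gives some $j_0\in I(F)$ with $\vv\notin F_{j_0}$. The definition of the gap (read with the natural normalization, namely measured relative to the minimum value $c_{j_0}$, i.e.\ $\ip{\aa_{j_0}}{\vv}-c_{j_0}\ge\sigma_{j_0}$) then yields $\ip{\aa_{j_0}}{\vv-\xx}/\sigma_{j_0}\ge 1$. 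For every other $j\in I(F)$ we have $\ip{\aa_j}{\vv-\xx}\ge 0$, because $\aa_j$ is minimized on $F_j\supseteq F$. Summing these gives $\ip{\bb}{\vv-\xx}\ge 1$. (iii) Since $\ip{\bb}{\cdot-\xx}$ is linear, the bound extends from the vertices $\vertices(\cC)\setminus F$ to their convex hull $\cC[F]$, so $\ip{\bb}{\yy-\xx}\ge1$ for all $\yy\in\cC[F]$. (iv) Hence $\|\yy-\xx\|\ge\ip{\aa}{\yy-\xx}\ge 1/\|\bb\|^*$, and taking the infimum over $\xx,\yy$ gives the first inequality.

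For the consequence, we apply the first inequality to each $G\in\faces(F)$ with $G\subsetneq\cC$, using $\dist(G,\cC[G])\ge\dist(\affine(G),\cC[G])$, and then take the minimum over such $G$ as in Definition~\ref{def.facial_distance}. Alternatively, Lemma~\ref{lemma} gives the same conclusion directly.

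The main obstacle is step (ii): we must justify that every vertex outside $F$ misses at least one facet containing $F$, and that the gap bound applies with the correct additive normalization. The first point is immediate from $F=\bigcap_{j\in I(F)}F_j$, as noted before the theorem. The second is only a matter of interpreting $\sigma(\aa_j)$ as the gap over the minimum value. If the paper's convention assumes $\min_\cC\ip{\aa_j}{\cdot}=0$, we would translate $\cC$, which changes no distances, so that this holds for each $j$ separately; the terms can be handled one at a time because only the differences $\ip{\aa_j}{\vv-\xx}$ enter the argument.
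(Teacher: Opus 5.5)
Your proof is correct and follows the paper's argument: both form $\sum_{j\in I(F)}\aa_j/\sigma_j$, use a facet $F_{j_0}\supseteq F$ that each vertex outside $F$ misses to get a margin of at least $1$, and divide by the dual norm. Your version is slightly more careful than the paper's in three places. The paper writes the key inequality as $\ip{\aa_j}{\xx-\yy}\ge\sigma_j$, which has the wrong sign for an $\argmin$-exposed facet. It tacitly measures $\sigma_j$ relative to $\min_{\cC}\ip{\aa_j}{\cdot}$. And it leaves implicit that the remaining terms of the sum are nonnegative.
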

\begin{proof}
The choice of $\aa_j$ and $\sigma_j$ implies that  $\|\aa_j\|^*=1, \; \ip{\aa_j}{\xx}$ is constant on $\affine(F_j)$, and $\ip{\aa_j}{\xx-\yy} \ge \sigma_j$ for all $\xx\in \affine(F_j)$ and all $\yy\in \conv(\vertices(\cC)\setminus F_j).$ 
Let 
\[
\aa := \sum_{j\in I(F)} \frac{a_j}{\sigma_j}.
\]
It follows that $\ip{\aa}{\xx}$ is constant on $\affine(F)=\bigcap_{j\in I(F)}\affine(F_j)$.  Furthermore, for $\vv\in \vertices(\cC)\setminus \affine(F) = \vertices(\cC)\setminus\bigcap_{j\in I(F)}\affine(F_j)$ there must exist at least one $j\in I(F)$ such that $\vv\not \in F_j$ and thus for all $\xx\in \affine(F) = \bigcap_{j\in I(F)}\affine(F_j)$ it holds that
\[
\ip{\aa}{\xx-\vv} \ge \frac{\ip{\aa_j}{\xx-\vv}}{\sigma_j} \ge 1.
\]
Since this holds for all $\xx\in \affine(F)$ and $\vv \in \vertices(\cC)\setminus \affine(F)$ it follows that
\[
\dist(\affine(F),\cC[F]) \ge \frac{1}{\|\aa\|^*} \ge \frac{1}{\|\sum_{j\in I(F)} \aa_j/\sigma_j\|^*}.
\]
\end{proof}

For $F\in \faces(\cC)$ let $J(F) := \{j \;|\; F\cap F_j\ne \emptyset\}$. Observe that $J(F)\ne \emptyset$ for all $F\in\faces(\cC)$.  In particular, $J(\cC) = \{1,\dots,m\}$.  The following corollary readily follows from Theorem~\ref{thm.facets} and the triangle inequality.

\begin{corollary}
Suppose $\cC\subseteq\R^n$ is a polytope with facets $F_1,\dots,F_m$ and $\delta_i := \dist(\affine(F_i),\cC[F_i])$ for $i=1,\dots,m.$ 
Then for each $F\in \faces(\cC)$ it holds that
\[
\Phi(F,\cC) \ge \frac{1}{\sum_{j\in J(F)} 1/\delta_j}.
 \]
In particular,
\[
\frac{1}{\sum_{j=1}^m 1/\delta_j} \le \Phi(\cC) \le \min\{\delta_j \;|\; j=1,\dots,m\}.
\]
\end{corollary}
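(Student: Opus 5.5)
The plan is to apply Theorem~\ref{thm.facets} with a judicious choice of facet-exposing directions, and then bound the dual norm by the triangle inequality. For each facet $F_j$, I first choose the facet-exposing direction $\aa_j$ so that its gap equals $\delta_j$. This is possible by the observation preceding Theorem~\ref{thm.facets}: $\dist(\affine(F_j),\cC[F_j])\ge\sigma(\aa)$ for every face-exposing direction $\aa$ of $F_j$, with equality for a suitably chosen one. Concretely, take a separating functional $\aa_j$ with $\|\aa_j\|^*=1$ attaining the distance between the affine set $\affine(F_j)$ and the polytope $\cC[F_j]$. This $\aa_j$ is constant on $\affine(F_j)$, hence it exposes the facet $F_j$ (after shifting so that its value on $F_j$ is zero). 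Then $\sigma_j=\delta_j$.

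With these choices, Theorem~\ref{thm.facets} gives
\[
\Phi(F,\cC)\ge \min_{G\in\faces(F),\,G\subsetneq\cC}\frac{1}{\|\sum_{j\in I(G)}\aa_j/\delta_j\|^*}.
\]
By the triangle inequality and $\|\aa_j\|^*=1$,
\[
\Bigl\|\sum_{j\in I(G)}\aa_j/\delta_j\Bigr\|^*\le \sum_{j\in I(G)}1/\delta_j.
\]
For $G\subseteq F$ with $G\neq\emptyset$, every $j\in I(G)$ satisfies $\emptyset\ne G\subseteq F\cap F_j$, so $I(G)\subseteq J(F)$. Since the terms are positive, $\sum_{j\in I(G)}1/\delta_j\le\sum_{j\in J(F)}1/\delta_j$. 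Taking the minimum over $G$ gives the first claim. The lower bound in the ``in particular'' statement follows by taking $F=\cC$, for which $J(\cC)=[m]$.

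For the upper bound, I use that each facet $F_j$ is a proper face of $\cC$ and a face of $\cC$ itself. Hence $\Phi(\cC)\le\dist(F_j,\cC[F_j])$. By Lemma~\ref{lemma} type reasoning, or directly, I need $\dist(F_j,\cC[F_j])$ to equal $\delta_j=\dist(\affine(F_j),\cC[F_j])$, and this is the delicate point. The trivial inequality goes the wrong way: $\dist(F_j,\cdot)\ge\dist(\affine(F_j),\cdot)$.

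I would handle this as follows. Since $F_j$ is a facet, $\cC[F_j]$ lies in the half-space on one side of the hyperplane $H=\affine(F_j)$, and the distance from the affine hyperplane is determined by the gap $\sigma(\aa_j)$. So I want a point of $F_j$ realizing the distance. Take a vertex $\vv\notin F_j$ with $\ip{\aa_j}{\vv}=\sigma_j$, and consider the point of $H$ closest to $\vv$.

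If that point lies outside $F_j$, I would argue via Lemma~\ref{lemma}'s shrinking argument. Alternatively, I would avoid the issue by noting that $\Phi(\cC)=\min_{G}\dist(\affine(G),\cC[G])$ by Lemma~\ref{lemma} applied with $F=\cC$. This directly gives $\Phi(\cC)\le\dist(\affine(F_j),\cC[F_j])=\delta_j$ for each $j$. This second route is the one I would use, and it removes the obstacle.
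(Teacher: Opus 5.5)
Your proof is correct and follows the argument the paper intends: choose facet-exposing directions with $\sigma_j=\delta_j$, apply Theorem~\ref{thm.facets}, bound the dual norm by the triangle inequality, and use $I(G)\subseteq J(F)$; your appeal to Lemma~\ref{lemma} with $F=\cC$ for the upper bound $\Phi(\cC)\le\delta_j$ fills in the step the paper's one-line justification leaves implicit. One small tightening: the distance-attaining functional exposes $F_j$ not merely because it is constant on $\affine(F_j)$, but because it also exceeds that constant by $\delta_j>0$ at every vertex outside $F_j$.
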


\section{Linear transformations}
\label{sec.linear}
Throughout this section we shall assume that 
$\R^m$ and $\R^n$ be endowed with some norms,
 $\cC \subseteq \R^n$ be a polytope, $A\in \R^{m\times n}$, and $\cD = A(\cC)\subseteq \R^m$.   
A natural question is how the facial distances of $\cD$ and of $\cC$ are related.  
The following lemma provides some  tools to tackle that question.

\begin{lemma}\label{lemma.transformed}
Let $\cC \subseteq \R^n$ be a polytope, $A \in \R^{m \times n}$, $\cD = A(\cC)$. Then the following hold:
\begin{itemize}
\item[(a)] $\vertices(\cD) \subseteq A (\vertices(\cC))$.
\item[(b)] If $G\in \faces(\cD)$ then $A^{-1}(G)\cap \cC \in \faces(\cC)$.
\item[(c)] If $G\in \faces(\cD)$ and $G\subsetneq \cD$ then $F:=A^{-1}(G)\cap \cC\subsetneq \cC$ and 
\begin{equation}\label{eq.transformed.dist}
\dist(G, \cD[G]) = \min\{\|A(\yy-\xx)\| \; |\; \xx \in F, \yy \in \cC[F]\}.
\end{equation}
\end{itemize}

\end{lemma}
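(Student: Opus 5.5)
Parts (a) and (b) are standard facts about linear images of polytopes; part (c) is where the work is.

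\emph{Part (a).} Since $\cC=\conv(\vertices(\cC))$ and $A$ is linear, $\cD=\conv(A(\vertices(\cC)))$. Every vertex of a polytope of the form $\conv(S)$ with $S$ finite belongs to $S$, so $\vertices(\cD)\subseteq A(\vertices(\cC))$.

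\emph{Part (b).} Let $G\in\faces(\cD)$. Choose $\bb\in\R^m$ exposing $G$, so that $G=\argmin\{\ip{\bb}{\zz}\;|\;\zz\in\cD\}$; if $G=\cD$, take $\bb=\0$. Because $\ip{\bb}{A\xx}=\ip{A\transp\bb}{\xx}$ and $A$ maps $\cC$ onto $\cD$, the two minimum values coincide. Hence
\[
\argmin\{\ip{A\transp\bb}{\xx}\;|\;\xx\in\cC\}=\{\xx\in\cC\;|\;A\xx\in G\}=A^{-1}(G)\cap\cC .
\]
This is an exposed face of $\cC$, and it is nonempty because $G\subseteq A(\cC)$.

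\emph{Part (c), properness of $F$.} Set $F:=A^{-1}(G)\cap\cC$. If $F=\cC$, then $\cD=A(\cC)\subseteq G$, which contradicts $G\subsetneq\cD$. So $F\subsetneq\cC$.

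\emph{Part (c), reducing the distance formula.} The plan is to prove
\[
A(F)=G \qquad\text{and}\qquad A(\cC[F])=\cD[G].
\]
Once these hold, \eqref{eq.transformed.dist} follows immediately, because
\[
\dist(G,\cD[G])=\min\{\|\pp-\qq\|\;|\;\pp\in A(F),\ \qq\in A(\cC[F])\}.
\]
The identity $A(F)=G$ is immediate from $G\subseteq A(\cC)$.

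\emph{Part (c), the key step $A(\cC[F])=\cD[G]$.} This is where I expect the main obstacle. Let $\vv\in\vertices(\cC)\setminus F$. Then $A\vv\in\cD$ and $A\vv\notin G$, by the definition of $F$. I want to conclude that $A\vv\in\cD[G]$, and this rests on the following claim.

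\emph{Claim.} For a proper face $G$ of a polytope $\cD$,
\[
\cD\setminus G\subseteq\cD[G]=\conv(\vertices(\cD)\setminus G).
\]
This is false in general. For example, a point in the relative interior of an edge joining a vertex of $G$ to a vertex outside $G$ lies in $\cD\setminus G$ but not in $\cD[G]$. So the exact equality above is suspect, and I would not rely on it for the ``$\ge$'' direction.

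\emph{Part (c), the ``$\le$'' direction.} This direction is safe. By (a), each vertex $\ww\in\vertices(\cD)\setminus G$ equals $A\vv$ for some $\vv\in\vertices(\cC)$. Such a $\vv$ must lie outside $F$, since otherwise $\ww=A\vv\in G$. Hence $\cD[G]\subseteq A(\cC[F])$, and therefore
\[
\dist(G,\cD[G])\ge\min\{\|A(\yy-\xx)\|\;|\;\xx\in F,\ \yy\in\cC[F]\}.
\]

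\emph{Part (c), the reverse inequality.} Here I would argue through exposing functionals rather than through the claim. Take minimizers $\xx\in F$, $\yy\in\cC[F]$, and write $\yy$ as a convex combination of vertices $\vv$ of $\cC$ outside $F$. Each $A\vv$ is a point of $\cD\setminus G$. Write each $A\vv$ as a convex combination of vertices of $\cD$, and separate those lying in $G$ from those not in $G$. Moving the mass on vertices in $G$ into the $G$-side point, with a rescaling argument analogous to the one in the proof of Lemma~\ref{lemma}, should produce a pair at distance no larger than $\|A(\yy-\xx)\|$.

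Making this rescaling rigorous is the step I expect to be delicate. If it fails, I would settle for the inequality $\ge$ together with $A(F)=G$, since that one-sided version is what a lower bound on $\Phi(\cD)$ requires.
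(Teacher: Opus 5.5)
Your doubt about the step $A(\cC[F])=\cD[G]$ is justified, and the problem is worse than you suspected. The equality in (c) is false in general, so no argument, rescaling or otherwise, can supply the reverse inequality.

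Take $\cC=\conv\{\ee_1,\ee_2,\ee_3\}\subseteq\R^3$ and $A=\matr{0 & 1 & 2}$. Then $\cD=[0,2]$ and $\vertices(\cD)=\{0,2\}$, while the vertex $\ee_2$ is sent to the non-vertex $1$. For $G=\{0\}$ you get $F=\{\ee_1\}$, $\cC[F]=[\ee_2,\ee_3]$ and $\cD[G]=\{2\}$. Hence $\dist(G,\cD[G])=2$, while $\min\{|A(\yy-\xx)| : \xx\in F,\ \yy\in\cC[F]\}=|A\ee_2|=1$.

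The paper's proof of (c) asserts that $\ww\in\cD[G]$ if and only if $\ww=A\yy$ for some $\yy\in\cC[F]$. Parts (a) and (b) only give the inclusion $\cD[G]\subseteq A(\cC[F])$, which is exactly the argument you wrote. The reverse inclusion fails precisely in the situation your edge example points to: a vertex of $\cC$ outside $F$ lands in $\cD\setminus G$ but outside $\cD[G]$.

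Your rescaling sketch also fails for a concrete reason. Write $A\yy=\mu\,\ww_G+(1-\mu)\qq$ with $\ww_G\in G$, $\qq\in\cD[G]$ and $\mu\in(0,1)$. The pair whose difference is parallel to $A\yy-A\xx$ is $\bigl((A\xx-\mu\ww_G)/(1-\mu),\,\qq\bigr)$, and its difference is $(A\yy-A\xx)/(1-\mu)$. So the distance is inflated by $1/(1-\mu)>1$, and the first point need not even lie in $G$. In the example, $1=\tfrac12\cdot 0+\tfrac12\cdot 2$ becomes the pair $0,2$ at distance $2$. In Lemma~\ref{lemma} the factor is $\lambda<1$ because both points are contracted toward a common vertex of $G$; nothing of that kind is available here.

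What your proposal correctly establishes is: (a), (b), $F\subsetneq\cC$, $A(F)=G$, and $\dist(G,\cD[G])\ge\min\{\|A(\yy-\xx)\| : \xx\in F,\ \yy\in\cC[F]\}$. Equality does hold under the extra hypothesis $A(\vertices(\cC))\subseteq\vertices(\cD)$. Then every $\vv\in\vertices(\cC)\setminus F$ satisfies $A\vv\in\vertices(\cD)\setminus G\subseteq\cD[G]$. This covers the nonsingular case used for the upper bound in the subsequent corollary, whose lower bound needs only your inequality. It also covers the map $\matr{I & -I}$ in the $\ell_1$-ball example. Instead of leaving the reverse direction open, you should state the counterexample and the corrected statement.
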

\begin{proof}
\begin{itemize}
\item[(a)] Observe that $\cD = A(\cC) = A(\conv(\vertices(\cC))) = \conv(A(\vertices(\cC)))$.  Therefore $\vertices(\cD) \subseteq  A(\vertices(\cC))$ since $\vertices(\cD)$ is the smallest set of points whose convex hull is $\cD$.
\item[(b)] Since $G \in \faces(\cD)$, there exists $\aa \in \R^m$ such that
\[
G = \argmin_{\yy\in \cD} \ip{\aa}{\yy} = A\left(\argmin_{\xx\in \cC} \ip{\aa}{A\xx}\right) = A\left(\argmin_{\xx\in \cC} \ip{A\transp\aa}{\xx}\right).
\] 
It thus follows that 
\[
F=A^{-1}(G)\cap \cC = \argmin_{\xx\in \cC} \ip{A\transp\aa}{\xx},
\]
and consequently $F\in \faces(\cC)$.
\item[(c)] Equation~\eqref{eq.transformed.dist} is an immediate consequence of parts (a) and (b).  Indeed, parts (a) and (b) imply that for any two $\zz,\ww\in \R^m$ it holds that 
$\zz \in G$ and $\ww \in \cD[G]$ if and only if $\zz = A\xx$ and $\ww = A \yy$ for some 
$\xx \in F$ and $\yy \in \conv(\vertices(\cC) \setminus F)$. Equation~\eqref{eq.transformed.dist} thus follows.
\end{itemize}

\end{proof}

\begin{corollary} Let $\cC \subseteq \R^n$ be a polytope, $A \in \R^{n \times n}$ be non-singular,  and $G\in \faces(\cD)$. Then for $F := A^{-1}(G)\cap \cC \in \faces(\cC)$ it holds that
\begin{equation}\label{eq.dist.transformed}
 \frac{\Phi(F,\cC)}{\|A^{-1}\|} \leq \Phi(G,\cD) \leq  \|A\| \Phi(F,\cC).
\end{equation}
\end{corollary}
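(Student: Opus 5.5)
The plan is to use the fact that, for non-singular $A$, the map $\xx\mapsto A\xx$ is a bijection between $\cC$ and $\cD$ that preserves the face lattice. Part (c) of Lemma~\ref{lemma.transformed} then turns every distance term in $\Phi(G,\cD)$ into a term $\|A(\yy-\xx)\|$ with $\xx\in F'$, $\yy\in\cC[F']$ for a face $F'$ of $\cC$. We then sandwich $\|A\ww\|$ using $\|\ww\|/\|A^{-1}\|\le\|A\ww\|\le\|A\|\,\|\ww\|$, where $\|A\|$ and $\|A^{-1}\|$ are the operator norms induced by the norm on $\R^n$.

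\textbf{Step 1: the face correspondence.} Since $A$ is invertible, $A$ restricts to an affine isomorphism $\cC\to\cD$. Hence $\vertices(\cD)=A(\vertices(\cC))$, and $G'\mapsto A^{-1}(G')\cap\cC=A^{-1}(G')$ is an inclusion-preserving bijection $\faces(\cD)\to\faces(\cC)$. Its inverse is $F'\mapsto A(F')$, which is a face of $\cD$ by the argument of Lemma~\ref{lemma.transformed}(b) applied to $A^{-1}$. Under this bijection, $G'\in\faces(G)$ with $G'\subsetneq\cD$ corresponds exactly to $F'\in\faces(F)$ with $F'\subsetneq\cC$. Moreover, $\cD[G']=A(\cC[F'])$, because $\vertices(\cD)\setminus G'=A(\vertices(\cC)\setminus F')$ by injectivity.

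\textbf{Step 2: compare term by term.} For each such pair $G'\leftrightarrow F'$, Lemma~\ref{lemma.transformed}(c) gives
\[
\dist(G',\cD[G'])=\min\{\|A(\yy-\xx)\|\;|\;\xx\in F',\ \yy\in\cC[F']\}.
\]
Applying the two operator-norm inequalities inside the minimum yields
\[
\frac{\dist(F',\cC[F'])}{\|A^{-1}\|}\le\dist(G',\cD[G'])\le\|A\|\,\dist(F',\cC[F']).
\]

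\textbf{Step 3: take minima.} Minimizing over the corresponding index sets, which are in bijection by Step 1, gives~\eqref{eq.dist.transformed} directly from Definition~\ref{def.facial_distance}.

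The only real obstacle is Step 1, namely checking that the bijection respects both the restriction $G'\in\faces(G)$ and the properness condition $G'\subsetneq\cD$. This is where non-singularity is essential: without it, distinct faces of $\cC$ may collapse, and $A(\cC[F'])$ may contain points of $G'$. Everything else is the routine norm inequality.
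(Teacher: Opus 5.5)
Your proposal is correct and follows essentially the same route as the paper: rewrite each $\dist(G',\cD[G'])$ via Lemma~\ref{lemma.transformed}(c), sandwich $\|A\ww\|$ between $\|\ww\|/\|A^{-1}\|$ and $\|A\|\,\|\ww\|$, and then minimize using the bijection $G'\mapsto A^{-1}(G')\cap\cC$ between the proper faces of $\cD$ contained in $G$ and the proper faces of $\cC$ contained in $F$. Your Step~1 spells out what the paper asserts in one line, namely the identity $\cD[G']=A(\cC[F'])$ and the preservation of properness, and it uses the correct index conditions $G'\subsetneq\cD$, $F'\subsetneq\cC$ where the written proof has the typos $G'\ne\cC$, $G'\ne G$ and $F'\ne F$.
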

\begin{proof}
Equation~\eqref{eq.transformed.dist} in Lemma~\ref{lemma.transformed} implies that for all
$G'\in \faces(G)$  with $G'\ne \cC$ and $F':= A^{-1}(G')\cap \cC$
\[
\begin{aligned}
\dist(G',\cD[G']) &= \min\{\|A(\yy-\xx)\| \; |\; \xx \in F', \yy \in \cC[F']\} \\
&= \frac{1}{\|A^{-1}\|} \cdot \min\{\|A^{-1}\| \cdot \|A(\yy-\xx)\| \; |\; \xx \in F', \yy \in \cC[F']\} \\
&\ge \frac{1}{\|A^{-1}\|}\min\{\|\yy-\xx\| \; |\; \xx \in F', \yy \in \cC[F']\} \\
& = \frac{\dist(F',\cC[F'])}{\|A^{-1}\|} \\
& \ge \frac{\Phi(F,\cC)}{\|A^{-1}\|}.
\end{aligned}
\]
Since this holds for all $G'\in \faces(G)$  with $G'\ne \cC$, the lower bound of~\eqref{eq.dist.transformed} follows.

Similarly,
\[
\begin{aligned}
\dist(G',\cD[G']) &= \min\{\|A(\yy-\xx)\| \; |\; \xx \in F', \yy \in \cC[F']\} \\
&\leq \|A\| \cdot \min\{\|(\yy-\xx)\| \; |\; \xx \in F', \yy \in \cC[F']\} \\
& = \|A\| \dist(F',\cC[F'])
\end{aligned}
\]
Since $A$ is nonsingular, the map $G' \mapsto F' := A^{-1}(G')\cap \cC$ is a bijection between $\faces(G)$ and $\faces(F)$. Thus, 
\begin{align*}
   \min_{\substack{G'\in\faces(G)\\ G'\neq G}} \dist(G',\cD[G']) & \leq \min_{\substack{G'\in\faces(G)\\ G'\neq G}} \|A\| \dist(A^{-1}(G')\cap \cC,\cC[A^{-1}(G')\cap \cC]) \\
   & = \|A\| \min_{\substack{F'\in\faces(F)\\ F'\neq F}} \dist(F',\cC[F']) \\
   & =\|A\| \Phi(F, \cC).
\end{align*}
Thus, the upper bound of~\eqref{eq.dist.transformed} follows.

\end{proof}

\section{Examples}
\label{sec.examples}
%: simplex, $\ell_1$ ball, $\ell_{\infty}$ ball, Birkhoff polytope}  

The examples below illustrate how the above calculus rules can be used to bound or compute the facial distance.  For the sake of intuition, throughout this section we assume that the norm in the underlying space is the canonical Euclidean norm $\|\cdot\|_2$.

\begin{example}[standard simplex]\label{simplex}
Suppose %$\R^n$ is endowed with the Euclidean norm, 
$n > 1$, and 
\[\cC=\{\xx\in \R^n \; |\; \xx\ge 0, \ip{\mathbf{1}}{\xx} = 1\}.\]
The vertices of $\cC$ are the unitary vectors $\ee_i, \; i=1,\dots,n$.  The
facets of $\cC$ are
\[
F_i = \{\xx\in \cC \; |\; x_i = 0\} \;  \text{ for } \; i=1,\dots,n.
\]
Thus the directions $\ee_i, \; i=1,\dots,n$ are face-exposing directions of the facets of $\cC$ and  $\sigma(\ee_i) = 1$ for $i=1,\dots,n$. 
For any $F\in \faces(\cC)$ with $F\ne \cC$ we have
\[
I(F) = \{i \in [n] \; |\; x_i = 0 \text{ for all } \xx\in F\}
\]
and so $|I(F)|\le n-1$.
%Hence for all $G\in \faces(F)$ it follows that $\|\sum_{j\in I(G)} \ee_j/\sigma_j\|^* = \sqrt{|I(G)|} \le \sqrt{|I(F)|}$.
Therefore Theorem~\ref{thm.facets} implies that for all $F\in\faces(\cC)$ with $F\subsetneq \cC$ 
\[
\dist(\affine(F),\cC[F]) \ge \frac{1}{\|\sum_{j\in I(F)} \ee_j\|^*} = \frac{1}{\sqrt{|I(F)|}}\ge \frac{1}{\sqrt{n-1}},
\]
and consequently Lemma~\ref{lemma} implies that
\[
\Phi(\cC) \ge \frac{1}{\sqrt{n-1}}. 
\]
We note that this lower bound is within a small constant of the exact value of $\Phi(\cC)$ previously computed in~\cite{lacoste2015global,pena2019polytope}:
\[
\Phi(\cC) = \left\{ \begin{array}{ll} \frac{2}{\sqrt{n}} & \text{ if } n \text{ is even}\\
\frac{2}{\sqrt{n-1/n}}& \text{ if } n \text{ is odd.}
\end{array} \right.
\]

\end{example}

\begin{example}[$\ell_1$ unit ball]\label{ex.l1.ball}
Suppose $n\ge 1$ and %$\R^n$ is endowed with the Euclidean norm and 
\[\cD=\{\xx\in \R^n \; |\; \|\xx\|_1 \le 1\} = A(\cC)\]
 where $A = \matr{I & -I}\in \R^{n\times 2n} $ and $\cC \subseteq \R^{2n}$ is the standard simplex in $\R^{2n}$, that is,
\[\cC=\{\xx\in \R^{2n}  \; |\; \xx\ge 0, \ip{\mathbf{1}}{\xx} = 1\}.\]
The set of vertices of $\cD$ is $\{\pm \ee_j  \; |\; j \in [n]\}$.  

Let $G\in \faces(\cD)$.  We next compute $\dist(\affine(F),\cD[G])$.  Since $\cD$ and the Euclidean norm are invariant under changes of signs and permutations of any set of coordinates, we can assume without loss of generality that $G$ is of the form $\conv\{\ee_1,\dots,\ee_k\} = \{\zz\in\cD: \zz_{k+1:n} = \0\}$ for some $k\le n$. 
It thus follows that $F = A^{-1}(G) = \{\xx\in \cC\;|\;  \xx_{k+1:2n} = \0\}$ and hence $\cC[F] = \{\yy\in \cC\;|\;  \yy_{1:k} = \0\}$. Lemma~\ref{lemma.transformed} implies that
\[
\begin{aligned}
\dist(G,\cD[G]) &= \min\{\|A(\xx-\yy)\| \; |\; \xx \in F, \yy \in \cC[F]\} \\
&= \min\{\|\matr{\xx_{1:n} - \xx_{n+1:2n}} - \matr{\yy_{1:n} - \yy_{n+1:2n}}\|  \; |\; \xx\in F, \yy\in \cC[F]\} \\
&= 
\min\left\{ \left\|\matr{\xx_{1:k} \\ \0_{k+1:n}} - \matr{-\yy_{n+1:n+k} \\ \yy_{k+1:n} - 
\yy_{n+k+1:2n}}\right\|  \; |\; \xx\in F, \yy\in \cC[F] \right\} \\
&= 
\min\left\{ \left\|\matr{\xx_{1:k} +\yy_{n+k+1:2n} \\ \yy_{k+1:n} - 
\yy_{n+k+1:2n}}\right\| \; |\; \xx\in F, \yy\in \cC[F] \right\}.
\end{aligned}
\]
A simple calculation then shows that
\[
\dist(G,\cD[G]) = \left\{ \begin{array}{ll} \min\{\|\xx_{1:k}\| \; |\; \xx\in F\} = \frac{1}{\sqrt{k}} & \text{ if } k<n\\
\min\{\|\xx_{1:n}+\yy_{n+1:2n}\| \; |\; \xx\in F, \yy\in \cC[F]\} = \frac{2}{\sqrt{n}}& \text{ if } k=n.
\end{array} \right.
\]
Consequently,
\[
\Phi(\cD) = \frac{1}{\sqrt{n-1}}.
\]
\end{example}

\begin{example}[$\ell_{\infty}$ unit ball]
Suppose $n\ge 1$ and %$\R^n$ is endowed with the Euclidean norm and 
\[
\cC=\{\xx\in \R^n  \; |\; \|\xx\|_\infty \le 1\} = \cC_1\times\cdots\times \cC_n
\]
where $\cC_i = [-1,1]$ for each $i=1,\dots,n$.  In this case $\Phi(\cC_i) = 2$ for $i=1,\dots,n.$ Thus  Theorem~\ref{thm.product} implies that
\[
\Phi(\cC) = \frac{1}{\|\matr{1/2,\dots,1/2}\|_2} = \frac{1}{\sqrt{n}/2} = \frac{2}{\sqrt{n}}.
\]

\end{example}

\begin{example}[Birkhoff polytope]\label{birkhoff}
Suppose $\R^{n\times n}$ is endowed with the Frobenius norm and $\cC$ is the set of $n \times n$ doubly stochastic matrices, that is,
\[
\cC = \left\{\xx \in \R^{n\times n} \,|\, \xx\ge \mathbf{0}, \sum_{i=1}^n x_{ij} = 1  \text{ and } \sum_{j=1}^n x_{ij} = 1 \text{ for } i,j\in [n]\right\}.
\]
We will focus on the case $n\ge 3$ since for the cases $n=1$ and $n=2$ the Birkhoff polytope is respectively a singleton and a segment and thus of limited interest.

It is known that the vertices of $\cC$ are the set of permutation matrices and the facets of $\cC$ are of the form
\[
F_{ij} = \{\xx\in \cC  \; |\; x_{ij} = 0\} \text{ for } (ij)\in [n]\times[n].
\]
Thus the directions $\ee_i\ee_j\transp$ for $(ij)\in [n]\times[n]$
 are face-exposing directions of the facets of $\cC$ and  $\sigma(\ee_i\ee_j\transp) = 1$ for $(ij)\in [n]\times[n]$.
For any $F\in\faces(\cC)$ with $F\ne \cC$ we have 
\[
I(F) = \{(ij) \in [n]\times[n]  \; |\; x_{ij} = 0 \text{ for all } \xx\in F\}.
\]
and so $|I(F)|\le n^2-1$.
%Hence for all $G\in \faces(F)$ it follows that $\|\sum_{j\in I(G)} \ee_j/\sigma_j\|^* = \sqrt{|I(G)|} \le \sqrt{|I(F)|}$.
Therefore Theorem~\ref{thm.facets} implies that for all $F\in\faces(\cC)$ with $F\subsetneq \cC$ 
\[
\dist(\affine(F),\cC[F]) \ge \frac{1}{\|\sum_{j\in I(F)} \ee_j\|^*} = \frac{1}{\sqrt{|I(F)|}}\ge \frac{1}{\sqrt{n^2-1}},
\]
and consequently Lemma~\ref{lemma} implies that
\[
\Phi(\cC) \ge \frac{1}{\sqrt{n^2-1}}.
\]

\end{example}

The bounds in Example~\ref{simplex} and Example~\ref{birkhoff} generalize to the broader class of {\em simplex-like} polytopes introduced by Garber and Meshi~\cite{garber2016linear}.

\begin{definition}
A polytope $\cC\subseteq \R^n$ is {\em simplex-like} if it is of the form $
\{\xx\in \R^n  \; |\; A \xx = \bb, \xx \ge \0\}$ and $\vertices(\cC) \subseteq \{0,1\}^n$.
\end{definition}

Example~\ref{ex.simplex.like} relies on the following property of polytopes with a polyhedral description.

\begin{proposition}
\label{prop.facets}
Suppose $\cC\subseteq \R^n$ is a polytope of the form
$\cC = \{\xx\in \R^n  \; |\; A \xx = \bb, \xx \ge \0\}$.  Then each facet of $\cC$ has a face-exposing direction of  of the form $\ee_j$ for some $j\in [n]$.
\end{proposition}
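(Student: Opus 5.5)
We show that every facet $F$ of $\cC$ coincides with a set $\{\xx\in\cC \;|\; x_j = 0\}$ for a suitable coordinate $j$. Since $\xx\ge \0$ on $\cC$, such a set is exactly $\argmin\{\ip{\ee_j}{\xx} \;|\; \xx\in\cC\}$, with minimum value $0$. Because the section works with the Euclidean norm, $\|\ee_j\|^*=\|\ee_j\|_2=1$, so $\ee_j$ is then a face-exposing direction of $F$. The positivity of the gap $\sigma(\ee_j)$ is automatic from the definition, since every vertex outside $F$ has $x_j>0$.

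\textbf{Step 1: express $F$ by vanishing coordinates.} Since $F\in\faces(\cC)$ and $F\subsetneq\cC$, there is some $\mathbf{c}\in\R^n$ with $F=\argmin\{\ip{\mathbf{c}}{\xx} \;|\; \xx\in\cC\}$. I would apply LP duality to $\min\{\ip{\mathbf{c}}{\xx} \;|\; A\xx=\bb,\ \xx\ge\0\}$. This problem is feasible and bounded because $\cC$ is a nonempty polytope. Duality gives a dual optimal $\yy$ with reduced costs $\mathbf{s}:=\mathbf{c}-A\transp\yy\ge \0$. For $\xx\in\cC$ we have
\[
\ip{\mathbf{c}}{\xx}=\ip{\yy}{\bb}+\ip{\mathbf{s}}{\xx},
\]
and the optimal value equals $\ip{\yy}{\bb}$. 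Hence $\xx\in\cC$ is optimal iff $\ip{\mathbf{s}}{\xx}=0$. Because $\mathbf{s},\xx\ge\0$, this holds iff $x_j=0$ for all $j\in S:=\{j \;|\; s_j>0\}$. Therefore
\[
F=\{\xx\in\cC \;|\; x_j=0 \text{ for all } j\in S\}.
\]

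\textbf{Step 2: select one coordinate.} For each $j$ put $H_j:=\{\xx\in\cC \;|\; x_j=0\}$. Because $F\neq\emptyset$, the minimum of $x_j$ over $\cC$ is $0$ for every $j\in S$. So each $H_j$ with $j\in S$ equals $\argmin\{\ip{\ee_j}{\xx} \;|\; \xx\in\cC\}$ and is a face of $\cC$ containing $F$.

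If every $H_j$ with $j\in S$ were all of $\cC$, then Step 1 would give $F=\bigcap_{j\in S}H_j=\cC$, a contradiction. (This also covers $S=\emptyset$.) Hence some $j\in S$ has $H_j\subsetneq\cC$. Then $F\subseteq H_j\subsetneq\cC$ with $H_j$ a proper face, and $F$ is a facet, i.e.\ a maximal proper face. So $F=H_j$, which is the claim.

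\textbf{Main obstacle.} The only nontrivial point is Step 1: that faces of a polyhedron in standard form are cut out by setting some coordinates to zero. I would justify it through the complementary-slackness argument above rather than through a relative-interior argument, since the duality route is short and self-contained.
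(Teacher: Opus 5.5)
Your proof is correct and follows essentially the same route as the paper: take a coordinate $j$ that vanishes on all of $F$ but not on all of $\cC$, and conclude $F=\{\xx\in\cC \;|\; x_j=0\}$ by maximality of the facet. The paper simply asserts that such a $j$ exists because $F\neq\cC$, whereas your complementary-slackness Step 1 proves that $F$ is cut out by its vanishing coordinates, so your version supplies the one step the paper leaves implicit.
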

\begin{proof}
Suppose $F\subseteq \cC$ is a facet.  Let 
\[J(F) :=\{i\in [n]\;|\;  \ip{\ee_i}{\xx} = 0 \text{ for all } \xx\in F\}.\]
In other words, $J(F)\subseteq[n]$ is the set of inequalities that are active at all points in $F$.  Since $F$ is a facet of $\cC$, in particular $F\ne \cC$ and thus there exists $j\in J(F)$ such that $\ip{\ee_j}{\xx} > 0$ for some $\xx\in \cC\setminus F$.  Since $\cC\subseteq \{\xx\in \cC\;|\; \ip{\ee_j}{\xx} \ge 0\}$, to show that $\ee_j$ is a facet-exposing direction for $F$ it suffices to show that
\begin{equation}\label{eq.to.finish}
F =\{\xx\in \cC\;|\; \ip{\ee_j}{\xx} = 0\}.
\end{equation}
The construction of $J(F)$ and the fact that $j\in J(F)$ guarantee that $F\subseteq \{\xx\in \cC\;|\; \ip{\ee_j}{\xx} = 0\}$.  Furthermore, the choice of $j\in J(F)$ guarantees that $ \{\xx\in \cC\;|\; \ip{\ee_j}{\xx} = 0\}$ is a proper face of $\cC$.  Since $F\subseteq \{\xx\in \cC\;|\; \ip{\ee_j}{\xx} = 0\}$ and $F$ is a facet of $\cC$, these two sets must be the same, that is,~\eqref{eq.to.finish} holds.
\end{proof}

\begin{example}\label{ex.simplex.like}
Suppose $\cC\subseteq \R^n$ is a {\em simplex-like} polytope.  Proposition~\ref{prop.facets} implies that each facet of $\cC$ has a face-exposing direction of  of the form $\ee_j$ for some $j\in [n]$ and in addition $\sigma(\ee_j) = 1$ because $\vertices(\cC)\subseteq \{0,1\}^n.$  For $F\in\faces(\cC)$, the size of the set $I(F)$ is the number of zeros of the most dense point in $F$.  In other words, $|I(F)|$ is a measure of the sparsity of $F$.  In particular,  $|I(F)| \le n$ for all $F\in\faces(\cC)$. 
Therefore Theorem~\ref{thm.facets} implies that for all $F\in\faces(\cC)$ with $F\subsetneq \cC$ 
\[
\dist(\affine(F),\cC[F]) \ge \frac{1}{\|\sum_{j\in I(F)} \ee_j\|^*} = \frac{1}{\sqrt{|I(F)|}}\ge \frac{1}{\sqrt{n}},
\]
and consequently Lemma~\ref{lemma} implies that
\[
\Phi(\cC) \ge \frac{1}{\sqrt{n}}.
\]

\end{example}

\section{Upper bound conjecture on the facial distance}
\label{sec.conjecture}

Some straightforward calculations shows that in the examples in Section~\ref{sec.examples} the ratio $\Phi(\cC)/\diam(\cC)$ is bounded below as follows: $\sqrt{2}/\sqrt{n}$ for the standard simplex, $1/(2\sqrt{n})$ for the $\ell_1$ unit ball in $\R^n$, $1/n$ for the $\ell_\infty$ ball in $\R^n$, $1/(n\sqrt{2n})$ for the $n\times n$ Birkhoff polytope, and $1/n$ for a simplex-like polytope in $\R^n$.
In addition, lower complexity results for the Frank-Wolfe algorithm suggest that $\Phi(\cC)/\diam(\cC)$ should be bounded above.  The following conjecture thus arises.
\begin{conjecture}\label{the.conj}
Suppose $\cC$ is a polytope of dimension $n$ and the ambient space is endowed with the Euclidean norm.  Then
\[
\frac{\Phi(\cC)}{\diam(\cC)} \le \frac{C}{\sqrt{n}}.
\]
for some constant $C>0$.
\end{conjecture}
In addition to the examples discussed in Section~\ref{sec.examples}, the next two partial results provide support for this conjecture.

\begin{proposition}\label{prop.upper.simplex}
Suppose $\cC$ is an $n$-dimensional simplex and $n$ is an odd number.  Then
\begin{equation}\label{eq.upper.bound}
\frac{\Phi(\cC)}{\diam(\cC)} \le \frac{2\sqrt{2}}{\sqrt{n+1}}.
\end{equation}
\end{proposition}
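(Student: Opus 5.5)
Since $n$ is odd, the simplex has $n+1$ vertices, an even number, so I will split the vertex set $V=\{\vv_0,\dots,\vv_n\}$ into two halves $V_1,V_2$ of size $m:=(n+1)/2$ each. The face $G:=\conv(V_1)$ is a proper face of $\cC$ with $\cC[G]=\conv(V_2)$. Definition~\ref{def.facial_distance} then gives
\[
\Phi(\cC)\le \dist(\conv(V_1),\conv(V_2)),
\]
so it is enough to exhibit a good pair of points, one in each of these two faces.

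The natural choice is the two barycenters, $\bar\vv_1:=\frac1m\sum_{\vv\in V_1}\vv$ and $\bar\vv_2:=\frac1m\sum_{\ww\in V_2}\ww$. Writing $D:=\diam(\cC)$, I will show that $\|\bar\vv_1-\bar\vv_2\|_2\le \sqrt2\,D/\sqrt m=2D/\sqrt{n+1}$. However, a single fixed partition may not achieve this, because the simplex need not be regular. So instead of fixing one partition, I will average over all partitions, or equivalently pick a random balanced partition. Since the minimum over partitions is at most the average, it suffices to bound
\[
\mathbb{E}\,\|\bar\vv_1-\bar\vv_2\|_2^2 .
\]

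For the averaging step, translate so that the overall centroid $\bar\vv:=\frac1{n+1}\sum_i\vv_i$ is $\0$, and set $\ww_i:=\vv_i-\bar\vv$. Since $\sum_i\ww_i=\0$, we have $\bar\vv_1=-\bar\vv_2$, so $\bar\vv_1-\bar\vv_2=2\bar\vv_1=\frac2m\sum_{i}\epsilon_i\ww_i$, where $\epsilon_i\in\{0,1\}$ is the indicator of $\vv_i\in V_1$. Using $\sum_i\ww_i=\0$, this becomes $\frac1m\sum_i s_i\ww_i$ with signs $s_i=2\epsilon_i-1\in\{\pm1\}$ summing to zero. For a uniformly random balanced sign vector, $\mathbb{E}[s_i^2]=1$, and the balance constraint forces
\[
\mathbb{E}[s_is_j]=-\frac1n\quad (i\ne j).
\]
Hence
\[
\mathbb{E}\Big\|\sum_i s_i\ww_i\Big\|^2=\Big(1+\frac1n\Big)\sum_i\|\ww_i\|^2-\frac1n\Big\|\sum_i\ww_i\Big\|^2=\frac{n+1}{n}\sum_i\|\ww_i\|^2 .
\]
Next, the standard identity $\sum_i\|\ww_i\|^2=\frac1{n+1}\sum_{i<j}\|\vv_i-\vv_j\|^2$ holds, and each $\|\vv_i-\vv_j\|^2\le D^2$, so $\sum_i\|\ww_i\|^2\le \frac{n}{2}D^2$. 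Combining these,
\[
\mathbb{E}\,\|\bar\vv_1-\bar\vv_2\|^2\le \frac1{m^2}\cdot\frac{n+1}{n}\cdot\frac n2D^2=\frac{2D^2}{n+1}.
\]
Therefore some balanced partition satisfies $\Phi(\cC)\le \sqrt2\,D/\sqrt{m}\cdot\frac{1}{\sqrt{2}}\cdot\sqrt2=2D/\sqrt{n+1}$. This is even better than the claimed bound $2\sqrt2/\sqrt{n+1}$, so the stated inequality~\eqref{eq.upper.bound} follows with room to spare.

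The main obstacle is the averaging computation: getting the pairwise correlation $\mathbb{E}[s_is_j]=-1/n$ right, and the centroid identity. As a sanity check before writing it up, I will verify the bound on the regular simplex. There the Euclidean distance formula $\Phi=2/\sqrt{n+1}$ (with $n+1$ vertices) and $D=\sqrt2$ give the ratio $\sqrt2/\sqrt{n+1}$, which lies within the derived bound. If the constant turns out tighter or looser than expected, the slack factor of $\sqrt2$ in the statement absorbs it. A fallback that avoids the second-moment computation is to bound $\|\bar\vv_1-\bar\vv_2\|$ directly by $D$ times a crude factor, but that alone would not give the $1/\sqrt{n}$ decay, so the averaging argument is essential.
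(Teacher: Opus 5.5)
Your proof is correct. Its skeleton is the same as the paper's: compare the barycenters of complementary halves $V_1,V_2$, and bound the minimum over balanced partitions by the average squared distance. The difference lies in how that average is estimated.

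The paper normalizes $\diam(\cC)=1$ and centers the vertices. In Step 1 it proves the pointwise bound $\|\xx_i\|\le 1$ by a contradiction argument. In Step 2 it bounds the averaged cross terms crudely, using only $\sum_{i\ne j}\ip{\xx_i}{\xx_j}\le 0$. This gives $\delta\le 2\sum_i\|\xx_i\|^2\le 2(n+1)$.

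You make two refinements:
\begin{itemize}
\item You compute the average exactly via $\mathbb{E}[s_is_j]=-1/n$, obtaining $\frac{n+1}{n}\sum_i\|\ww_i\|^2$.
\item You replace Step 1 by the identity $\sum_i\|\ww_i\|^2=\frac{1}{n+1}\sum_{i<j}\|\vv_i-\vv_j\|^2\le\frac n2 D^2$.
\end{itemize}
Each refinement gains roughly a factor $2$, and together they improve the paper's squared bound by exactly a factor $4$. You get $\mathbb{E}\|\bar\vv_1-\bar\vv_2\|^2\le 2D^2/(n+1)$, hence $\Phi(\cC)/\diam(\cC)\le\sqrt2/\sqrt{n+1}$. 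The regular simplex attains this bound exactly, as your own sanity check shows. The paper's looser bookkeeping yields only the stated $2\sqrt2/\sqrt{n+1}$.

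Your final line misstates your own result. The square root of $2D^2/(n+1)$ is $\sqrt2\,D/\sqrt{n+1}$, not $2D/\sqrt{n+1}$, and the chain ``$\sqrt2\,D/\sqrt m\cdot\frac1{\sqrt2}\cdot\sqrt2$'' is garbled. This does not break the proof, since $2D/\sqrt{n+1}$ is weaker than what you derived and still below the target. Still, you should state the sharp constant. Also drop the remark about the slack factor ``absorbing'' discrepancies, since your computation needs no slack.
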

\begin{proof}
Since $\cC$ is an $n$-dimensional simplex, it is of the form $\cC = \conv\{\xx_1,\dots,\xx_{n+1}\}$ where $\xx_1,\dots,\xx_{n+1}$ are affine independent.
Furthermore, by scaling and shifting if needed, without loss of generality we can assume that $\diam(\cC) = 1$ and $\xx_1+ \cdots + \xx_{n+1} = \0$.  

Since $\cC$ is a simplex, it follows that every  $F\in\faces(\cC)$ is of the form $F=\conv\{\xx_i \;|\; i\in I\}$ for some nonempty $I\subseteq [n+1]$ and in that case $\cC[F] = \conv\{\xx_j \;|\; j\in [n+1]\setminus I\}$.  Therefore to prove~\eqref{eq.upper.bound} it suffices to show the following claim.

\medskip

\noindent
{\bf Claim.} There exists $I\subseteq [n+1]$ with $|I| = (n+1)/2$ such that
\begin{equation}\label{eq.claim}
\left\|\sum_{i\in I} \xx_i - \sum_{j\in [n+1]\setminus I} \xx_j \right\|^2 \le 2(n+1).
\end{equation}
Indeed, the above claim implies that for $F = \conv\{\xx_i \;|\; i\in I\}$ it holds that
\[
\begin{aligned}
\dist(F,\cC[F]) &\le \left\|\frac{2}{n+1}\sum_{i\in I} \xx_i - \frac{2}{n+1}\sum_{j\in [n+1]\setminus I} \xx_j \right\|\\& \le \frac{2}{n+1} \cdot \sqrt{2(n+1)} \\
&\le \frac{2\sqrt{2}}{\sqrt{n+1}},
\end{aligned}
\]
and hence $$\frac{\Phi(\cC)}{\diam(\cC)} \le \dist(F,\cC[F]) \le \frac{2\sqrt{2}}{\sqrt{n+1}}.$$
We next prove the claim.  To that end, we proceed in two steps.

\medskip
\noindent
{\em Step 1:}  Because
$\diam(\cC) = 1$ and $\xx_1+ \cdots + \xx_{n+1} = \0$, it holds that 
\begin{equation}\label{eq.ineqs}
\|\xx_i\| \le 1 \text{ for all } i\in[n+1] \text{ and } \sum_{i\ne j} \ip{\xx_i}{\xx_j} \le 0.
\end{equation}
For the first inequality in~\eqref{eq.ineqs}, proceed by contradiction.  Suppose $\|\xx_i\| > 1$ for some $i\in [n+1]$.  Then 
\begin{equation}\label{eq.pair}
\|\xx_i - \frac{1}{n} \sum_{j\ne i} \xx_j\|^2 \ge \|\xx_i\|^2 - \frac{2}{n}  
\sum_{j\ne i} \ip{\xx_i}{\xx_j}.
\end{equation}
But $0 = \|\xx_i + \sum_{j\ne i} \xx_j\|^2 = \|\xx_i\|^2 + \|\sum_{j\ne i} \xx_j\|^2 + 2\sum_{j\ne i} \ip{\xx_i}{\xx_j}$ and so $\sum_{j\ne i} \ip{\xx_i}{\xx_j} \le -\|\xx_i\|^2 < 0$.  Thus~\eqref{eq.pair} implies that
\[
\|\xx_i - \frac{1}{n} \sum_{j\ne i} \xx_j\|^2 > \|\xx_i\|^2 > 1
\]
which contradicts the assumption $\diam(\cC) = 1$.

For the second inequality in~\eqref{eq.ineqs}, observe that $0 = \|\sum_i \xx_i\|^2 = \sum_i \|\xx_i\|^2 + 2\sum_{i\ne j} \ip{\xx_i}{\xx_j}$ and so $\sum_{i\ne j} \ip{\xx_i}{\xx_j} = -  \sum_i \|\xx_i\|^2/2 \le 0.$

\medskip
\noindent
{\em Step 2:} Let
\[
\delta:= \min_{I\subseteq[n+1] \atop |I|=(n+1)/2} \left\|\sum_{i\in I} \xx_i - \sum_{j\in [n+1]\setminus I} \xx_j \right\|^2.
\]
we next show that $\delta\le 2(n+1)$
and thus~\eqref{eq.claim} must hold for some $I\subseteq[n+1]$ with $|I|=(n+1)/2$.

To that end, observe that by adding over all $I\subseteq[n+1]$ with $|I|=(n+1)/2$ we get
\begin{equation}\label{eq.simplex.bound}
\begin{aligned}
{(n+1) \choose (n+1)/2} \delta & \le 
\sum_{I\subseteq[n+1] \atop |I|=(n+1)/2} \left\|\sum_{i\in I} \xx_i - \sum_{j\in [n+1]\setminus I} \xx_j \right\|^2\\
& \le {(n+1) \choose (n+1)/2} \left( \sum_{i} \|\xx_i\|^2 - 2\sum_{i\ne j}\ip{\xx_i}{\xx_j} \right)\\
& = {(n+1) \choose (n+1)/2} \left( 2\sum_{i} \|\xx_i\|^2 - \|\sum_{i}\xx_i \|^2 \right)\\
& = 2{(n+1) \choose (n+1)/2} \sum_{i} \|\xx_i\|^2.
\end{aligned}
\end{equation}
The second step above holds because $\sum_{i\ne j} \ip{\xx_i}{\xx_j} \le 0$.
Inequality~\eqref{eq.simplex.bound} in turn implies that
\[
\delta \le 2\sum_{i} \|\xx_i\|^2 \le 2(n+1).
\]

\end{proof}

\begin{proposition}
Suppose that for $i=1,\dots,k$ the sets $\cC_i \subseteq \R^{n_i}$ are polytopes and let $\cC:=\cC_1\times \cdots \times \cC_k$.  If each $\R^{n_i}$ and $\R^{n_1+\cdots+n_k}$ are endowed with the Euclidean norms then
\begin{equation}\label{eq.ratio}
\frac{\diam(\cC)}{\Phi(\cC)} = \|\left(\diam(\cC_1),\dots,\diam(\cC_k)\right)\| \cdot
\left\|\left(\frac{1}{\Phi(\cC_1)},\cdots,\frac{1}{\Phi(\cC_k)}\right)\right\|.
\end{equation}
In particular if each $\frac{\Phi(\cC_i)}{\diam(\cC_i)} \le \frac{C}{\sqrt{n_i}}$ then $\frac{\Phi(\cC)}{\diam(\cC)} \le \frac{C}{\sqrt{n_1+\cdots+n_k}}$.
\end{proposition}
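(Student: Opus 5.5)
The identity \eqref{eq.ratio} should follow by combining Proposition~\ref{prop.product} with Theorem~\ref{thm.product}, applied with $F_i=\cC_i$. With Euclidean norms on every block and on the product, the induced norm on $\R^k$ is $\vertiii{\cdot}=\|\cdot\|_2$. This norm is reasonable and self-dual, so $\vertiii{\cdot}^*=\|\cdot\|_2$ as well. Proposition~\ref{prop.product} then gives $\diam(\cC)=\|(\diam(\cC_1),\dots,\diam(\cC_k))\|$. Theorem~\ref{thm.product}, with $F=\cC$ (so that $\Phi(F,\cC)=\Phi(\cC)$), gives $1/\Phi(\cC)=\|(1/\Phi(\cC_1),\dots,1/\Phi(\cC_k))\|$. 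Multiplying the two identities yields \eqref{eq.ratio}.

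For the ``in particular'' statement, set $d_i:=\diam(\cC_i)$ and $\phi_i:=\Phi(\cC_i)$. The hypothesis $\phi_i/d_i\le C/\sqrt{n_i}$ rewrites as $1/\phi_i\ge \sqrt{n_i}/(C d_i)$. Substituting into \eqref{eq.ratio} gives
\[
\frac{\diam(\cC)}{\Phi(\cC)} \ge \frac{1}{C}\Big(\sum_i d_i^2\Big)^{1/2}\Big(\sum_i \frac{n_i}{d_i^2}\Big)^{1/2}.
\]
By the Cauchy--Schwarz inequality applied to the vectors $(d_i)$ and $(\sqrt{n_i}/d_i)$, the right-hand side is at least $\frac{1}{C}\sum_i \sqrt{n_i}$. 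Since $\sum_i\sqrt{n_i}\ge\sqrt{\sum_i n_i}$, we get $\diam(\cC)/\Phi(\cC)\ge \sqrt{n_1+\cdots+n_k}/C$, which is the desired bound after inverting. This argument in fact yields the stronger bound $C/\sum_i\sqrt{n_i}$.

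One degenerate case needs handling. If some $\cC_i$ is a single point, then $\Phi(\cC_i)$ is defined over an empty set of faces and $d_i=0$. Such factors should be dropped beforehand, since they change neither $\diam(\cC)$ nor $\Phi(\cC)$. After that, every $d_i>0$ and every $\phi_i>0$, and the divisions above are valid.

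The main obstacle is mild. The only nonroutine step is seeing that Cauchy--Schwarz pairs $d_i$ with $\sqrt{n_i}/d_i$ so that the diameters cancel.
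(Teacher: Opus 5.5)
Your proposal is correct and follows the paper's proof: the identity comes from combining Proposition~\ref{prop.product} and Theorem~\ref{thm.product} with $\vertiii{\cdot}=\vertiii{\cdot}^*=\|\cdot\|_2$, and the bound comes from Cauchy--Schwarz together with $\sum_i\sqrt{n_i}\ge\sqrt{\sum_i n_i}$. The differences are minor: the paper applies Cauchy--Schwarz to $(d_i)$ and $(1/\phi_i)$ before using the hypothesis termwise, whereas you substitute the hypothesis first, and your treatment of singleton factors is extra care the paper omits.
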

\begin{proof}
Identity~\eqref{eq.ratio} is an immediate consequence of Proposition~\ref{prop.product} and Theorem~\ref{thm.product}. The second statement  follows from Cauchy-Schwarz inequality and the concavity of the square root function:
\[
\begin{aligned}
\frac{\diam(\cC)}{\Phi(\cC)} &= \|\left(\diam(\cC_1),\dots,\diam(\cC_k)\right)\| \cdot
\left\|\left(\frac{1}{\Phi(\cC_1)},\cdots,\frac{1}{\Phi(\cC_k)}\right)\right\|\\
&\ge \sum_{i=1}^k \frac{\diam(\cC_i)}{\Phi(\cC_i)}\\
&\ge \sum_{i=1}^k \frac{\sqrt{n_i}}{C} \\
&\ge \frac{\sqrt{n_1+\cdots+n_k}}{C}.
\end{aligned}
\]
\end{proof}

In spite of the above two promising results, we have not been able to prove Conjecture~\ref{the.conj}.  It is natural to try to leverage Proposition~\ref{prop.upper.simplex} via an induction argument: The initial inductive step would be Proposition~\ref{prop.upper.simplex}.  This proposition shows the conjecture for any polytope $\cC$ of dimension $\dim(\cC) = n$ with the minimal number of vertices, namely $n+1$.  The induction would then carry on if we could show that the ratio $\Phi(\cC)/\diam(\cC)$ does not increase as the number of vertices of $\cC$ increases while $\dim(\cC)$ remains constant.  Unfortunately, the second step of this argument fails as illustrated by the following example.

\begin{example}\label{ex.counter.extra}
Suppose $0 \le \epsilon \le 1$ and 
consider the following distorted $\ell_1$ unit ball in $\R^3$:
\[
\cC = \conv\{\ee_1,-\ee_1,\ee_2 + \epsilon \ee_1,-\ee_2 + \epsilon \ee_1,\ee_3 - \epsilon \ee_1,-\ee_3 - \epsilon \ee_1\}.
\]
Observe that this is exactly the $\ell_1$ unit ball when $\epsilon = 0$. 
In that case Example~\ref{ex.l1.ball} shows that $\Phi(\cC)=1/\sqrt{2}$.  It thus follows that $\Phi(\cC)\approx 1/\sqrt{2}$ when $0 < \epsilon \ll 1$.  
Now consider the  polytope obtained after we remove one of the vertices of $\cC$:
\[
\cC' = \conv\{-\ee_1,\ee_2 + \epsilon \ee_1,-\ee_2 + \epsilon \ee_1,\ee_3 - \epsilon \ee_1,-\ee_3 - \epsilon \ee_1\}. 
\]
A simple calculation shows that when $0 < \epsilon \ll 1$ it holds that 
\[\Phi(\cC') \le \dist(F,\cC'[F]) = 2\epsilon \ll \Phi(\cC)\]
 for $F=[\ee_2 + \epsilon \ee_1,-\ee_2 + \epsilon \ee_1]\in\faces(\cC')$.  As a consequence $\Phi(\cC')/\diam(\cC') \ll \Phi(\cC)/\diam(\cC)$. This example shows that $\Phi(\cC)/\diam(\cC)$ may increase drastically when the number of vertices increases while $\dim(\cC)$ remains constant.
\end{example}

We also have not been able to prove Conjecture~\ref{the.conj} when $\cC$ is the Birkhoff polytope.  We hope Conjecture~\ref{the.conj} and the other developments in this paper motivate future work on a better understanding of the facial distance.

\begin{remark}
%We note that 
Example~\ref{ex.counter.extra} disproves a conjecture formulated by 
Lacoste-Julien and Jaggi in~\cite[Sect 3.1]{lacoste2015global}.  They conjectured that the  pyramidal
width (facial distance) is non-increasing when a vertex is added to a polytope, as long as the vertices of the old polytope remain vertices of the new one.  The very recent manuscript~\cite{zhao2026pyramidal} also provides a counterexample to the same conjecture.
\end{remark}

%\medskip

%In addition, it is true in the special case when $\cC$ is a simplex, as the following proposition holds.  Furthermore, the conjecture is also consistent with the 

%\bibliographystyle{plain}
%\bibliography{bibliography}

\end{document}